\newcommand{\ext}{{\rm ext}}
\newcommand{\End}{{\rm{End}\ts}}
\newcommand{\diag}{{\rm diag}}
\newcommand{\non}{\nonumber}
\newcommand{\wt}{\widetilde}
\newcommand{\wh}{\widehat}
\newcommand{\ot}{\otimes}
\newcommand{\la}{\lambda}
\newcommand{\al}{\alpha}
\newcommand{\ep}{\epsilon}
\newcommand{\ka}{\kappa}
\newcommand{\si}{\sigma}
\newcommand{\vs}{\varsigma}
\newcommand{\vp}{\varphi}
\newcommand{\de}{\delta}
\newcommand{\ze}{\zeta}
\newcommand{\hra}{\hookrightarrow}
\newcommand{\ve}{\varepsilon}
\newcommand{\ts}{\,}
\newcommand{\vac}{\mathbf{1}}
\newcommand{\qin}{q^{-1}}
\newcommand{\tss}{\hspace{1pt}}
\newcommand{\U}{ {\rm U}}
\newcommand{\CC}{\mathbb{C}\tss}
\newcommand{\ZZ}{\mathbb{Z}\tss}
\newcommand{\X}{ {\rm X}}
\newcommand{\Y}{ {\rm Y}}
\newcommand{\Sc}{\mathcal{S}}
\newcommand{\Cc}{\mathcal{C}}
\newcommand{\gl}{\mathfrak{gl}}
\newcommand{\oa}{\mathfrak{o}}
\newcommand{\spa}{\mathfrak{sp}}
\newcommand{\g}{\mathfrak{g}}
\newcommand{\sll}{\mathfrak{sl}}
\newcommand{\whg}{\wh{\g}}
\newcommand{\tra}{ {\rm t}}
\newcommand{\bi}{\bar{\imath}}
\newcommand{\bj}{\bar{\jmath}}
\newcommand{\Sym}{\mathfrak S}
\newcommand{\Fand}{\qquad\text{and}\qquad}
\newcommand{\For}{\qquad\text{or}\qquad}
\numberwithin{equation}{section}
\newtheorem{Theorem}{Theorem}[section]
\newtheorem{Lemma}[Theorem]{Lemma}
\newtheorem{Proposition}[Theorem]{Proposition}
\newtheorem{Corollary}[Theorem]{Corollary}
\theoremstyle{definition}
\newtheorem{Definition}[Theorem]{Definition}
\newtheorem{Remark}[Theorem]{Remark}
\begin{document}

\allowdisplaybreaks

\newcommand{\arXivNumber}{2008.07847}

\renewcommand{\thefootnote}{}

\renewcommand{\PaperNumber}{145}

\FirstPageHeading

\ShortArticleName{Representations of Quantum Affine Algebras in their $R$-Matrix Realization}

\ArticleName{Representations of Quantum Affine Algebras\\ in their $\boldsymbol{R}$-Matrix Realization\footnote{This paper is a~contribution to the Special Issue on Representation Theory and Integrable Systems in honor of Vitaly Tarasov on the 60th birthday and Alexander Varchenko on the 70th birthday. The full collection is available at \href{https://www.emis.de/journals/SIGMA/Tarasov-Varchenko.html}{https://www.emis.de/journals/SIGMA/Tarasov-Varchenko.html}}}

\Author{Naihuan JING~$^\dag$, Ming LIU~$^\ddag$ and Alexander MOLEV~$^\S$}

\AuthorNameForHeading{N.~Jing, M.~Liu and A.~Molev}

\Address{$^\dag$~Department of Mathematics, North Carolina State University, Raleigh, NC 27695, USA}
\EmailD{\href{mailto:jing@math.ncsu.edu}{jing@math.ncsu.edu}}

\Address{$^\ddag$~School of Mathematics, South China University of Technology, Guangzhou, 510640, China}
\EmailD{\href{mailto:mamliu@scut.edu.cn}{mamliu@scut.edu.cn}}

\Address{$^\S$~School of Mathematics and Statistics, University of Sydney, NSW 2006, Australia}
\EmailD{\href{mailto:alexander.molev@sydney.edu.au}{alexander.molev@sydney.edu.au}}

\ArticleDates{Received August 19, 2020, in final form December 25, 2020; Published online December 28, 2020}

\Abstract{We use the isomorphisms between the $R$-matrix and Drinfeld presentations of the quantum affine algebras in types $B$, $C$ and $D$ produced in our previous work to describe finite-dimensional irreducible representations in the $R$-matrix realization.
We also review the isomorphisms for the Yangians of these types and use Gauss decomposition to establish an equivalence of the descriptions of the representations in the $R$-matrix and Drinfeld presentations of the Yangians.}

\Keywords{$R$-matrix presentation; Drinfeld polynomials; highest weight representation; Gauss decomposition}

\Classification{17B37}

\renewcommand{\thefootnote}{\arabic{footnote}}
\setcounter{footnote}{0}

\section{Introduction}\label{sec:int}

The Yangians and quantum affine algebras associated with simple
Lie algebras comprise two remarkable families of
infinite-dimensional quantum groups, as introduced by V.~Drinfeld~\cite{d:ha}
and M.~Jimbo~\cite{j:qd}. Both families have since found numerous connections with
many areas in mathematics and physics, they
possess rich and versatile representation theory.

Finite-dimensional irreducible representations of the Yangians were classified
by Drinfeld in his paper \cite{d:nr} by relying on the pioneering
work by V.~Tarasov~\cite{t:sq, t:im}. The classification in~\cite{d:nr} uses
a new presentation of the Yangians
and quantum affine algebras which is now often referred to
as the {\em Drinfeld presentation} and which was used by V.~Chari and A.~Pressley
to classify finite-dimensional irreducible representations
of quantum affine algebras \cite[Chapter~12]{cp:gq}.

A different kind of presentations of these algebras known as {\em $R$-matrix presentations}
goes back to the work of the Leningrad school of L.~Faddeev; see, e.g.,
\cite{fri:qa, ks:qs, rs:ce,rtf:ql}.
In accordance to Drinfeld~\cite{d:qg},
such presentations can be produced from
the universal $R$-matrix associated with a quantum group
and they can be associated with arbitrary
finite-dimensional representations.
This approach was developed further in
a more recent work~\cite{w:rm}.

Explicit isomorphisms between the $R$-matrix and Drinfeld
presentations of the Yangians in type $A$ were given in the original work
\cite{d:nr}, while detailed proofs were produced
by J.~Brundan and A.~Kleshchev~\cite{bk:pp}. An analogous isomorphism
for the type $A$ quantum affine algebras is due to
J.~Ding and I.~Frenkel~\cite{df:it}.

Despite their importance in representation theory and applications,
such isomorphisms had remained unknown
beyond type $A$ until recent work \cite{grw:eb,jlm:ib,jlm:ib-c,jlm:ib-bd}, where they
were produced for the remaining classical types~$B$,~$C$ and~$D$.

Our goal in this paper is to give a brief review
of Yangians and quantum affine algebras in types $B$, $C$ and $D$
and apply the isomorphisms to describe
finite-dimensional irreducible representations of these algebras
in their $R$-matrix realization.

In the case of Yangians, such a description was already given in~\cite{amr:rp},
so that the isomorphisms connect two sides of the representation theory
and explain how additional symmetries of the representation parameters
arise from the Gauss decomposition of the generator matrices.
However, for the $R$-matrix realization of the quantum affine algebras the isomorphisms are essential
to get a parametrization of their finite-dimensional irreducible representations.

A key step in our arguments relies on a consistency property for
the triangular decomposition of the algebra and
the Gauss decomposition of the generator matrices.
The property ensures that the upper triangular Gaussian generators can be expressed
in terms of the `simple root generators' implying that the annihilation properties of the highest vectors
agree in both presentations of the algebras.
Its Yangian version is well-known
in type~$A$ (see, e.g., \cite[Section~3.1]{m:yc}), while proofs in the remaining
classical types follow from \cite[Lemma~5.15]{jlm:ib}.
This property has also been known for the quantum affine algebras of type $A$ \cite{kpt:ob}, so that
the isomorphism of \cite{df:it} connects the descriptions of the representations
given in \cite[Section~12.2]{cp:gq} and~\cite{gm:rt}. Similar to the approach of~\cite{kpt:ob},
we establish the consistency property
in Section~\ref{subsubsec:td} in types $B$, $C$ and $D$ by deriving it from the defining relations.
An alternative way
can rely on explicit formulas for the universal $R$-matrices
which, however, leads to more
involved calculations.

In Appendix~\ref{asec:isy} we give a modified version of the Yangian isomorphisms
produced in~\cite{jlm:ib}, which is based on the opposite
Gauss decomposition of the generator matrix. This version
can be applied to make an alternative connection
between the parameters of representations in the two realizations
of the Yangians.

\section[Representations of Yangians of types B, C and D]{Representations of Yangians of types $\boldsymbol{B}$, $\boldsymbol{C}$ and $\boldsymbol{D}$}\label{sec:ry}

\subsection{Classification results}\label{subsec:cry}

We will denote by $\g$ one of the simple Lie algebras of type $B_n$, $_n$ or $D_n$. That is,
$\g=\oa_N$ is either the orthogonal Lie algebra with $N=2n+1$ and $N=2n$,
or $\g=\spa_{N}$ is the symplectic Lie algebra with $N=2n$.
Choose simple roots in the form
\begin{gather}\label{aroots}
\al_i=\ep_i-\ep_{i+1}\qquad\text{for}\quad i=1,\dots,n-1,
\end{gather}
and
\begin{gather*}
\al_n=\begin{cases}\ep_n &\text{if}\ \g=\oa_{2n+1},\\
2\tss\ep_n &\text{if}\ \g=\spa_{2n},\\
\ep_{n-1}+\ep_n &\text{if}\ \g=\oa_{2n},
\end{cases}
\end{gather*}
where
$\ep_1,\dots,\ep_n$ is an orthonormal basis of a Euclidian space with the inner product
$(\cdot\ts,\cdot)$. The Cartan matrix $[A_{ij}]$ for $\g$ is given by
\begin{gather}\label{cartan}
A_{ij}=\frac{2(\al_i,\al_j)}{(\al_i,\al_i)}.
\end{gather}
We will use the notation
\begin{gather}\label{ri}
r_i=\tfrac12\tss(\al_i,\al_i),\qquad i=1,\dots,n.
\end{gather}

As defined in~\cite{d:nr},
the {\em Drinfeld Yangian} $\Y^D(\g)$ is generated by
elements $\kappa^{}_{i\tss r}$, $\xi_{i\tss r}^{+}$ and
$\xi_{i\tss r}^{-}$ with $i=1,\dots,n$ and $r=0,1,\dots$
subject to the defining relations
\begin{gather*}
[\kappa^{}_{i\tss r},\kappa^{}_{j\tss s}]=0,\\
\big[\xi_{i\tss r}^{+},\xi_{j\tss s}^{-}\big]=\de_{ij}\ts\kappa^{}_{i\ts r+s},\\
\big[\kappa^{}_{i\tss 0},\xi_{j\tss s}^{\pm}\big]=\pm\ts (\al_i,\al_j)\ts\xi_{j\tss s}^{\pm},\\
\big[\kappa^{}_{i\ts r+1},\xi_{j\tss s}^{\pm}\big]-\big[\kappa^{}_{i\tss r},\xi_{j\ts s+1}^{\pm}\big]=
\pm\ts\frac{(\al_i,\al_j)}{2}\big(\kappa^{}_{i\tss r}\ts \xi_{j\tss s}^{\pm}
+\xi_{j\tss s}^{\pm}\ts \kappa^{}_{i\tss r}\big),\\
\big[\xi_{i\ts r+1}^{\pm},\xi_{j\tss s}^{\pm}\big]-\big[\xi_{i\tss r}^{\pm},\xi_{j\ts s+1}^{\pm}\big]=
\pm\ts\frac{(\al_i,\al_j)}{2}\big(\xi_{i\tss r}^{\pm}\ts \xi_{j\tss s}^{\pm}
+\xi_{j\tss s}^{\pm}\ts \xi_{i\tss r}^{\pm}\big),\\
\sum_{p\in\Sym_m} \big[\xi_{i\tss r_{p(1)}}^{\pm},\big[\xi_{i\tss r_{p(2)}}^{\pm},\dots
\big[\xi_{i\tss r_{p(m)}}^{\pm},\xi_{j\tss s}^{\pm}\big]\dots \big]\big]=0,
\end{gather*}
where the last relation holds for all $i\ne j$, and we denoted $m=1-a_{ij}$.

By the results of \cite{d:nr} (they apply to any simple Lie algebra~$\g$),
every finite-dimensional irreducible representation
of the algebra $\Y^D(\g)$ is generated by a nonzero vector $\ze$ (called the {\em highest vector})
which is annihilated by all $\xi_{i\tss r}^{+}$ and is a simultaneous eigenvector for all
$\kappa^{}_{i\tss r}$ so that
\begin{gather*}
\kappa^{}_{i\tss r}\ze=d_{i\tss r}\tss \ze,\qquad d_{i\tss r}\in\CC.
\end{gather*}
Furthermore, there exist unique monic polynomials $P_1(u),\dots,P_n(u)$ in $u$
such that
\begin{gather*}
1+\sum_{r=0}^{\infty}d_{i\tss r}\tss u^{-r-1}=\frac{P_i(u+r_i)}{P_i(u)},\qquad i=1,\dots,n.
\end{gather*}
Equivalently, every finite-dimensional irreducible representation
of the algebra $\Y^D(\g)$ is generated by a nonzero vector $\ze'$ (the {\em highest vector}
with respect to the opposite triangular decomposition)
which is annihilated by all $\xi_{i\tss r}^{-}$ and is a simultaneous eigenvector for all
$\kappa^{}_{i\tss r}$ so that
\begin{gather*}
\kappa^{}_{i\tss r}\ze'=d^{\ts\prime}_{i\tss r}\tss \ze',\qquad d^{\ts\prime}_{i\tss r}\in\CC.
\end{gather*}
Furthermore, there exist unique monic polynomials $Q_1(u),\dots,Q_n(u)$ in $u$
such that
\begin{gather}\label{qpo}
1+\sum_{r=0}^{\infty}d^{\ts\prime}_{i\tss r}\tss u^{-r-1}=\frac{Q_i(u)}{Q_i(u+r_i)},\qquad i=1,\dots,n.
\end{gather}
All possible $n$-tuples of monic polynomials $(P_1(u),\dots,P_{n}(u))$
and $(Q_1(u),\dots,Q_{n}(u))$
arise in this way.
The equivalence of the two parametrizations is seen by using the automorphism of
the algebra $\Y^D(\g)$ defined by
\begin{gather*}
\xi_{i\tss r}^{+}\mapsto (-1)^{r+1}\tss\xi_{i\tss r}^{-},\qquad
\xi_{i\tss r}^{-}\mapsto (-1)^{r+1}\tss\xi_{i\tss r}^{+},\qquad
\ka_{i\tss r}\mapsto (-1)^{r+1}\tss\ka_{i\tss r}.
\end{gather*}

\subsection{Gaussian generators and isomorphisms}\label{subsec:rmy}

Introduce the following elements of the endomorphism algebra $\End\CC^{N}\ot\End\CC^{N}$:
\begin{gather*}
P=\sum_{i,j=1}^{N}e_{ij}\otimes e_{ji}\Fand
Q=\sum_{i,j=1}^N \ve_i\tss\ve_j\ts e_{ij}
\ot e_{i'j'},
\end{gather*}
where $e_{ij}\in\End\CC^{N}$ are the matrix units, and we use the notation
$i'=N+1-i$ and
set $\ve_i\equiv 1$ in the orthogonal case, and
\begin{gather*}
\ve_i=\begin{cases} \phantom{-}1& \text{for} \ i=1,\dots,n,\\
-1 & \text{for} \ i=n+1,\dots,2n,
\end{cases}
\end{gather*}
in the symplectic case. Set
\begin{gather*}
\kappa=\begin{cases} N/2-1& \text{in the orthogonal case,}\\
N/2+1& \text{in the symplectic case}.
\end{cases}
\end{gather*}
Following \cite{zz:rf}, consider
the $R$-{\em matrix} $R(u)$
\begin{gather*}
R(u)=1-\frac{P}{u}+\frac{Q}{u-\kappa}.
\end{gather*}

The {\em extended Yangian} $\X(\g)$ is defined as a unital associative algebra with generators
$t_{ij}^{(r)}$, where $1\leqslant i,j\leqslant N$ and $r=1,2,\dots$, satisfying certain quadratic relations.
Introduce the formal series
\begin{gather*}
t_{ij}(u)=\de_{ij}+\sum_{r=1}^{\infty}t_{ij}^{(r)}\ts u^{-r}
\in\X(\g)\big[\big[u^{-1}\big]\big]
\end{gather*}
and set
\begin{gather*}
T(u)=\sum_{i,j=1}^N e_{ij}\ot t_{ij}(u)
\in \End\CC^N\ot \X(\g)\big[\big[u^{-1}\big]\big].
\end{gather*}
Consider the tensor product algebra $\End\CC^{N}\ot\End\CC^{N}\ot \X(\g)$
and introduce the series with coefficients in this algebra by
\begin{gather*}
T_1(u)=\sum\limits_{i,j=1}^{N} e_{ij}\ot 1\ot t_{ij}(u)
\Fand
T_2(u)=\sum\limits_{i,j=1}^{N}1 \ot e_{ij}\ot t_{ij}(u).
\end{gather*}
The defining relations for the algebra $\X(\g)$ are then written in the form
\begin{gather*}
R(u-v)\ts T_1(u)\ts T_2(v)=T_2(v)\ts T_1(u)\ts R(u-v).
\end{gather*}
The {\em Yangian}
$\Y(\g)$ is defined as the subalgebra of
$\X(\g)$ which
consists of the elements stable under
the automorphisms
\begin{gather*}
\mu_f\colon \ T(u)\mapsto f(u)\ts T(u),
\end{gather*}
for all series $f(u)=1+f_1u^{-1}+f_2 u^{-2}+\cdots$ with $f_i\in\CC$.
Equivalently, $\Y(\g)$ is isomorphic to the quotient of the algebra $\X(\g)$ by the relation
\begin{gather}\label{unitaryint}
T^{\tss\tra}(u+\ka)\ts T(u)=1,
\end{gather}
where $\tra$ denotes the matrix
transposition with $e_{ij}^{\tra}=\ve_i\ts\ve_j\ts e_{j',i'}$.

\subsubsection{Isomorphisms}\label{subsubsec:isy}

Apply the Gauss decomposition to the matrix $T(u)$,
\begin{gather}\label{gd}
T(u)=F(u)\ts H(u)\ts E(u),
\end{gather}
where $F(u)$, $H(u)$ and $E(u)$ are uniquely determined matrices of the form
\begin{gather*}
F(u)=\begin{bmatrix}
1&0&\dots&0\ts\\
f_{21}(u)&1&\dots&0\\
\vdots&\vdots&\ddots&\vdots\\
f_{N1}(u)&f_{N2}(u)&\dots&1
\end{bmatrix},
\qquad
E(u)=\begin{bmatrix}
\ts1&e_{12}(u)&\dots&e_{1N}(u)\ts\\
\ts0&1&\dots&e_{2N}(u)\\
\vdots&\vdots&\ddots&\vdots\\
0&0&\dots&1
\end{bmatrix},
\end{gather*}
and $H(u)=\diag\ts [h_1(u),\dots,h_N(u) ]$. Define the series
with coefficients in $\Y(\g)$ by
\begin{gather*}
\kappa^{}_i(u)=h_i (u-(i-1)/2 )^{-1}\ts h_{i+1} (u-(i-1)/2 )
\end{gather*}
for $i=1,\dots,n-1$, and
\begin{gather*}
\kappa^{}_n(u)=\begin{cases}
h_n (u-(n-1)/2 )^{-1}\ts h_{n+1} (u-(n-1)/2 )
 &\text{for}\ \oa_{2n+1},\\
\tss h_n (u-n/2 )^{-1}\ts h_{n+1} (u-n/2 )
 &\text{for}\ \spa_{2n},\\
h_{n-1} (u-(n-2)/2 )^{-1}\ts h_{n+1} (u-(n-2)/2 )
 &\text{for}\ \oa_{2n}.
\end{cases}
\end{gather*}
Furthermore, set
\begin{gather*}
\xi_i^+(u)=f_{i+1\ts i} (u-(i-1)/2 ),\qquad
\xi_i^-(u)=e_{i\ts i+1} (u-(i-1)/2 )
\end{gather*}
for $i=1,\dots,n-1$,
\begin{gather*}
\xi_n^+(u)=\begin{cases}
f_{n+1\ts n}(u-(n-1)/2)
 &\text{for}\ \oa_{2n+1},\\
f_{n+1\ts n} (u-n/2 )
 &\text{for}\ \spa_{2n},\\
f_{n+1\ts n-1} (u-(n-2)/2 )
 &\text{for}\ \oa_{2n}
\end{cases}
\end{gather*}
and
\begin{gather*}
\xi_n^-(u)=\begin{cases}
e_{n\ts n+1} (u-(n-1)/2 )
&\text{for}\ \oa_{2n+1},\\
\frac{1}{2}e_{n\ts n+1} (u-n/2 )
&\text{for}\ \spa_{2n},\\
e_{n-1\ts n+1} (u-(n-2)/2 )
&\text{for}\ \oa_{2n}.
\end{cases}
\end{gather*}
Introduce elements of $\Y(\g)$ by the respective expansions into power series in $u^{-1}$,
\begin{gather*}
\kappa^{}_i(u)=1+\sum_{r=0}^{\infty}\kappa^{}_{i\tss r}\ts u^{-r-1}\Fand
\xi_i^{\pm}(u)=\sum_{r=0}^{\infty}\xi_{i\tss r}^{\pm}\ts u^{-r-1}
\end{gather*}
for $i=1,\dots,n$. According to \cite[Main Theorem]{jlm:ib},
the mapping which sends the genera\-tors~$\kappa^{}_{i\tss r}$ and $\xi_{i\tss r}^{\pm}$ of $\Y^{D}(\g)$ to the elements
of $\Y(\g)$ with the same names defines an isomorphism $\Y^{D}(\g)\cong \Y(\g)$.

\subsubsection{Central elements of the extended Yangian}\label{subsev:zuy}

A presentation of the extended Yangian $\X(\g)$ in terms
of the Gaussian generators is given in
\cite[Theorem~5.14]{jlm:ib}.\footnote{Note that the formulas in \cite[equations~(5.4) and (5.47)]{jlm:ib}
should be corrected by swapping the order of the factors on their right hand sides.}
By \cite[Theorem~5.8]{jlm:ib},
all coefficients of the series
\begin{gather*}
z(u)=\begin{cases}\displaystyle\prod_{i=1}^n h_i(u+\ka-i)^{-1}\tss \prod_{i=1}^n h_i(u+\ka-i+1)\cdot
h_{n+1}(u)\tss h_{n+1}(u-1/2)
&\text{if}\ N=2n+1,\vspace{1mm}\\
\displaystyle\prod_{i=1}^{n-1} h_i(u+\ka-i)^{-1}\tss \prod_{i=1}^n h_i(u+\ka-i+1)\cdot
h_{n+1}(u)
&\text{if}\ N=2n,
\end{cases}
\end{gather*}
belong to the center $\Cc$ of the extended Yangian $\X(\g)$. Moreover,
these coefficients generate the center and we have the tensor product decomposition
\begin{gather*}
\X(\g)\cong \Y(\g)\ot\Cc.
\end{gather*}
The following identity holds in $\X(\g)$:
\begin{gather*}
T^{\tss\tra}(u+\ka)\ts T(u)=z(u),
\end{gather*}
so that the Yangian $\Y(\g)$ is isomorphic to the quotient of $\X(\g)$ by the relation $z(u)=1$. We will record the relations which follow from the arguments in \cite[Section~5.3]{jlm:ib}.

\begin{Lemma}\label{lem:ygaurell}
In the algebra $\X(\g)$ we have
\begin{gather}
h_1(u+\ka)\tss h_{1'}(u)=z(u),\nonumber \\ 
h_i(u+\ka-i)\tss h_{i'}(u)=h_{i+1}(u+\ka-i)\tss h_{(i+1)'}(u),\label{yhiipone}
\end{gather}
where $i=1,\dots,n$ for $N=2n+1$, and $i=1,\dots,n-1$ for $N=2n$.
\end{Lemma}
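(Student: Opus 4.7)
My plan is to extract both identities from the matrix identity $T^{\tss\tra}(u+\ka)\tss T(u) = z(u)\cdot I$ (stated just above the lemma) by matching matrix entries against the Gauss decomposition $T(u) = F(u) H(u) E(u)$. A preliminary observation is that the matrix transpose reverses the order of factors, so that $T^{\tss\tra}(u) = E^{\tss\tra}(u)\tss H^{\tss\tra}(u)\tss F^{\tss\tra}(u)$, where $E^{\tss\tra}$ is upper triangular, $F^{\tss\tra}$ is lower triangular (both with $1$'s on the diagonal), and $H^{\tss\tra}(u)$ is diagonal with $(H^{\tss\tra})_{ii}(u) = h_{i'}(u)$, since $\ve_i^2 = 1$ and $(i')' = i$.

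For the first identity, I would focus on the $N$-th row of $T^{\tss\tra}(u+\ka)$. Since $E^{\tss\tra}$ is upper triangular with $1$'s on the diagonal, its $N$-th row is $(0, \dots, 0, 1)$, and combined with $(H^{\tss\tra})_{NN} = h_1$, the $N$-th row of $T^{\tss\tra}(u+\ka)$ equals $h_1(u+\ka)$ times the $N$-th row of $F^{\tss\tra}(u+\ka)$. The off-diagonal equations $(T^{\tss\tra}(u+\ka)\tss T(u))_{N,j} = 0$ for $j<N$, after cancelling the invertible series $h_1(u+\ka)$ on the left, reduce to $(G(u)\tss H(u)\tss E(u))_{N,j} = 0$, where $G(u) := F^{\tss\tra}(u+\ka)\tss F(u)$ is lower triangular with $1$'s on the diagonal. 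A straightforward induction on $j = 1, 2, \dots, N-1$ then forces $G_{N,k}(u) = 0$ for all $k<N$. Plugging this into the $(N,N)$-entry $(T^{\tss\tra}(u+\ka) T(u))_{NN} = z(u)$ yields $h_1(u+\ka)\tss h_N(u) = z(u)$, which is the first identity since $1' = N$.

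For the second identity, I would apply the analogous strategy to the principal lower-right submatrices of $T(u)$. Specifically, computing the $(N-i+1, N-i+1)$-entry of $T^{\tss\tra}(u+\ka) T(u) = z(u) I$ under the Gauss decomposition introduces, via the standard quantum-minor formula $t^{1,\dots,k}_{1,\dots,k}(u) = h_1(u)\tss h_2(u-1)\cdots h_k(u-k+1)$ for leading principal minors, the characteristic shift $u \mapsto u + \ka - i$ in the Gaussian generators. Comparing the expression obtained at the $(N-i+1, N-i+1)$-entry with that at the $(N-i, N-i)$-entry (both equal $z(u)$ by centrality) and using the first identity as the base case should yield the recurrence $h_i(u+\ka-i) h_{i'}(u) = h_{i+1}(u+\ka-i) h_{(i+1)'}(u)$.

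The hard part will be the careful bookkeeping of the argument shifts in the second identity, together with the commutation relations between the Gaussian generators needed to isolate and simplify the relevant factors when descending through the principal submatrices. This is precisely the technical content of Section~5.3 of \cite{jlm:ib}, which I would follow for the detailed computations.
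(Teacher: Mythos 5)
Your plan founders on a noncommutativity issue at its very first step. The factorization $T^{\tss\tra}(u)=E^{\tss\tra}(u)\tss H^{\tss\tra}(u)\tss F^{\tss\tra}(u)$ is not available in $\X(\g)$: the transposition $\tra$ acts only on matrix indices, and since the entries of $F(u)$, $H(u)$, $E(u)$ do not commute, $(FHE)^{\tss\tra}\ne E^{\tss\tra}H^{\tss\tra}F^{\tss\tra}$. This is not a harmless omission. If that factorization were valid, then from $E^{\tss\tra}(u+\ka)H^{\tss\tra}(u+\ka)F^{\tss\tra}(u+\ka)F(u)H(u)E(u)=z(u)$ one would conclude that $H^{\tss\tra}(u+\ka)F^{\tss\tra}(u+\ka)F(u)H(u)$ is both lower and upper triangular, hence diagonal, and reading off diagonal entries would give $h_i(u+\ka)\tss h_{i'}(u)=z(u)$ for \emph{every} $i$ --- relations with the shift $\ka$ independent of $i$, which are incompatible with the explicit formula for $z(u)$ and with the $i$-dependent shifts $\ka-i$ in \eqref{yhiipone}. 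So the step is false, not merely unjustified; the shifts $\ka-i$ are exactly the noncommutative effect that this manipulation erases. The same ordering problem already undermines your first-identity argument: from $t_{m'1}(u+\ka)=f_{m'1}(u+\ka)\tss h_1(u+\ka)$ the series $h_1(u+\ka)$ sits to the \emph{right} of $f_{m'1}(u+\ka)$ and to the left of $t_{mj}(u)$ inside $\big(T^{\tss\tra}(u+\ka)T(u)\big)_{Nj}$, so it cannot be ``cancelled on the left''. (The first identity does have a quick correct proof in this spirit: since $z(u)$ is central, $T^{\tss\tra}(u+\ka)=z(u)\tss T(u)^{-1}$; the $(N,N)$ entry of the left-hand side is $t_{11}(u+\ka)=h_1(u+\ka)$, while $\big(T(u)^{-1}\big)_{NN}=h_N(u)^{-1}$ from $T^{-1}=E^{-1}H^{-1}F^{-1}$, whence $h_1(u+\ka)\tss h_{1'}(u)=z(u)$.)

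For the second family of relations your proposal contains no actual argument. The minor formula $t^{1,\dots,k}_{1,\dots,k}(u)=h_1(u)h_2(u-1)\cdots h_k(u-k+1)$ is a type-$A$ fact, available in $\X(\g)$ only for the upper-left $n\times n$ corner where the $Q$-part of $R(u)$ does not contribute, whereas the entries $(N-i+1,N-i+1)$ of $T^{\tss\tra}(u+\ka)T(u)$ couple indices $j$ and $j'$; no computation is indicated that would produce the shift $\ka-i$, and you end by deferring the ``detailed computations'' to Section~5.3 of \cite{jlm:ib}. That is in effect what the paper itself does: Lemma~\ref{lem:ygaurell} is simply recorded as a consequence of the arguments of \cite[Section~5.3]{jlm:ib}, where the shifts $\ka-i$ arise from the rank-reduction embeddings (the subalgebra built on rows and columns $i+1,\dots,(i+1)'$ has parameter $\ka-i$) combined with the $RTT$ relations. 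Your write-up neither reproduces that derivation nor supplies a valid alternative, so as a proof of \eqref{yhiipone} it has a genuine gap.
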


\subsection{Highest weight representations}\label{subsec:hwy}

\begin{Definition}\label{def:hwry}
A representation $V$ of the algebra $\Y(\g)$ (or $\X(\g)$)
is called a {\it highest weight representation} if there
exists a nonzero vector $\ze\in V$ such that $V$ is generated by $\ze$
and the following relations hold:
\begin{alignat*}{2}
&t_{ij}(u)\ts\ze=0 \qquad &&\text{for} \quad
1\leqslant i<j\leqslant N, \qquad \text{and}\\
&t_{ii}(u)\ts\ze=\la_i(u)\ts\ze \qquad &&\text{for} \quad
i=1,\dots,N,
\end{alignat*}
for some formal series $\la_i(u)\in 1+u^{-1}\tss\CC\big[\big[u^{-1}\big]\big]$.
The vector $\ze$ is called the {\it highest vector} of the representation~$V$.
\end{Definition}

The following classification theorem for finite-dimensional irreducible representations
of the Yangians in types $B$, $C$ and $D$ was proved in~\cite{amr:rp}
in terms of their $R$-matrix presentation. We will use
the isomorphisms of~\cite{jlm:ib} which we recalled in Section~\ref{subsubsec:isy},
to make an explicit connection between this theorem and the results of~\cite{d:nr}.
Note that such a connection
was already established in~\cite{grw:eb}, where isomorphisms between
three presentations of the orthogonal and symplectic Yangians were constructed.
However, those results did not use the Gaussian presentation
which we will rely on in our arguments.

\begin{Theorem}\label{thm:yclassi}\quad\samepage
\begin{enumerate}
\item[$1.$]
Any finite-dimensional irreducible representation of the algebra $\Y(\g)$
is a highest weight representation. Its parameters satisfy the relations
\begin{gather*}
\frac{\la_i(u)}{\la_{i+1}(u)}=\frac{P_i(u+1)}{P_i(u)},
\qquad i=1,\dots,n-1,
\end{gather*}
and
\begin{alignat*}{2}
\frac{\la_n(u)}{\la_{n+1}(u)}&=\frac{P_n(u+1/2)}{P_n(u)}\qquad&&\text{for type $B_n$},\\
\frac{\la_n(u)}{\la_{n+1}(u)}&=\frac{P_n(u+2)}{P_n(u)}\qquad&&\text{for type $C_n$},\\
\frac{\la_{n-1}(u)}{\la_{n+1}(u)}&=\frac{P_n(u+1)}{P_n(u)}\qquad&&\text{for type $D_n$},
\end{alignat*}
for some monic polynomials $P_i(u)$ in $u$.

\item[$2.$] Every $n$-tuple $(P_1(u),\dots,P_n(u))$ of monic polynomials in $u$ arises in this way.

\item[$3.$] The series $\la_i(u)$ satisfy the relations
\begin{gather}\label{ylarel}
\la_i(u+\ka-i)\tss \la_{i'}(u)=\la_{i+1}(u+\ka-i)\tss \la_{(i+1)'}(u),
\end{gather}
where $i=0,1,\dots,n$ for $N=2n+1$, and $i=0,1,\dots,n-1$ for $N=2n$, and we set $\la_0(u)=\la_{0'}(u):=1$.
\end{enumerate}
\end{Theorem}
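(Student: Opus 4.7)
The plan is to transfer Drinfeld's classification of $\Y^D(\g)$-modules recalled in Section~\ref{subsec:cry} to $\Y(\g)$ via the isomorphism of Section~\ref{subsubsec:isy}, using the consistency property of Section~\ref{subsubsec:td} to match the two notions of highest vector, and to derive part~3 from Lemma~\ref{lem:ygaurell}. For part~1, let $V$ be a finite-dimensional irreducible $\Y(\g)$-module, regarded via the isomorphism as a finite-dimensional irreducible $\Y^D(\g)$-module. The opposite form of Drinfeld's classification~\eqref{qpo} supplies a cyclic vector $\zeta'\in V$ annihilated by every $\xi^{-}_{ir}$ and satisfying $\kappa^{}_i(u)\tss\zeta'=\frac{Q_i(u)}{Q_i(u+r_i)}\tss\zeta'$ for some monic polynomials $Q_i(u)$. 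Inspecting the formulas of Section~\ref{subsubsec:isy}, each $\xi_i^-(u)$ coincides, up to a shift of the spectral parameter and an overall scalar in type~$C$, with one of the ``simple'' upper-triangular Gaussian entries $e_{i,i+1}(u)$ (for $i<n$, and for $i=n$ in types $B$ and $C$) or $e_{n-1,n+1}(u)$ (for $i=n$ in type $D$). The consistency property of Section~\ref{subsubsec:td} (cf.~\cite[Lemma~5.15]{jlm:ib}) then expresses every remaining $e_{ij}(u)$ with $j>i+1$ as a nested commutator of these simple entries, whence $e_{ij}(u)\tss\zeta'=0$ for all $1\leqslant i<j\leqslant N$.

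The Gauss factorization $T(u)=F(u)\tss H(u)\tss E(u)$ now yields $t_{ij}(u)\tss\zeta'=\sum_{k\leqslant i}f_{ik}(u)\tss h_k(u)\tss e_{kj}(u)\tss\zeta'=0$ for $i<j$, while $t_{ii}(u)\tss\zeta'=h_i(u)\tss\zeta'=:\la_i(u)\tss\zeta'$, so $\zeta'$ is a highest vector in the sense of Definition~\ref{def:hwry}. For $1\leqslant i<n$, combining $\kappa^{}_i(u)=h_i(u-(i-1)/2)^{-1}\tss h_{i+1}(u-(i-1)/2)$ with the Drinfeld-side eigenvalue and setting $P_i(u):=Q_i(u+(i-1)/2)$ produces the claimed formula with $r_i=1$; the three cases $i=n$ are handled analogously with the type-specific shifts, the values $r_n=1/2,\,2,\,1$ in types $B_n$, $C_n$, $D_n$ reproducing the displayed ratios. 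Part~2 is then immediate: given any monic $(P_1,\ldots,P_n)$, inverting these shifts produces $(Q_1,\ldots,Q_n)$, and Drinfeld's existence statement furnishes a $\Y^D(\g)$-module with these Drinfeld parameters which transports via the isomorphism to the required $\Y(\g)$-module. For part~3, passing to the quotient $z(u)=1$ that defines $\Y(\g)$ reduces Lemma~\ref{lem:ygaurell} to $h_1(u+\ka)\tss h_{1'}(u)=1$ together with $h_i(u+\ka-i)\tss h_{i'}(u)=h_{i+1}(u+\ka-i)\tss h_{(i+1)'}(u)$; evaluating on $\zeta'$ and using $h_j(u)\tss\zeta'=\la_j(u)\tss\zeta'$ for every $j=1,\ldots,N$ (as in part~1) yields~\eqref{ylarel}, the case $i=0$ recording the unitarity identity $\la_1(u+\ka)\tss\la_{1'}(u)=1$.

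The principal obstacle is the reconciliation of the two notions of highest vector in part~1: the Drinfeld vector $\zeta'$ is annihilated only by the ``simple'' generators $\xi^{-}_{ir}$, whereas Definition~\ref{def:hwry} demands annihilation by every $t_{ij}(u)$ with $i<j$. This is especially delicate in type $D$, where $\xi^{-}_n$ corresponds to the ``skipping'' entry $e_{n-1,n+1}(u)$ rather than $e_{n,n+1}(u)$; the consistency property of~\cite[Lemma~5.15]{jlm:ib} is precisely what allows one to propagate annihilation from the simple set to the full set of upper-triangular Gaussian entries, and thence to all $t_{ij}(u)$ with $i<j$.
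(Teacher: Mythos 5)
Your route is essentially the paper's: transfer Drinfeld's classification through the isomorphism of Section~\ref{subsubsec:isy}, take the opposite highest vector $\ze'$ with the data \eqref{qpo}, propagate the annihilation property from the simple Gaussian entries to all $e_{ij}(u)$ with $i<j$ via the consistency result (for the Yangian this is \cite[Lemma~5.15]{jlm:ib}, not the quantum affine Section~\ref{subsubsec:td} you point to, but that is a labelling slip), and obtain Part~3 from Lemma~\ref{lem:ygaurell} with $z(u)=1$; Part~2 is the surjectivity of the Drinfeld parametrization after undoing the shifts. However, there is a genuine gap at the step ``$t_{ii}(u)\ts\ze'=h_i(u)\ts\ze'=:\la_i(u)\ts\ze'$''. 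What the Drinfeld side gives you is only that the ratios $\kappa^{}_i(u)$, i.e., $h_i(\cdot)^{-1}h_{i+1}(\cdot)$ (or $h_{n-1}(\cdot)^{-1}h_{n+1}(\cdot)$ in type $D_n$), act on $\ze'$ by scalars. This does not by itself imply that each individual series $h_i(u)$ has $\ze'$ as an eigenvector, and without that the series $\la_i(u)$ --- the very parameters the theorem is about --- are not even defined. The same unproved assumption is reused in your Part~3 (``using $h_j(u)\ts\ze'=\la_j(u)\ts\ze'$ for every $j=1,\dots,N$ as in part~1'').

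The paper closes exactly this hole with an extra input: in $\Y(\g)$ one also has $z(u)\ts\ze'=\ze'$, and combining this with the $\kappa^{}_i$-eigenvalue property pins down the action of $h_1(u),\dots,h_{n+1}(u)$ on $\ze'$. Concretely, expressing $h_{i+1}$ through $h_i$ and $\kappa^{}_i$ and substituting into $z(u)$ leads, after telescoping, to a relation of the form $h_1(u+\ka)\tss h_1(u)\tss\ze'=s(u)\tss\ze'$ with a known scalar series $s(u)$, which is solved recursively in the coefficients of $u^{-1}$ to show each coefficient of $h_1(u)$ acts on $\ze'$ by a scalar; then the relations of Lemma~\ref{lem:ygaurell} (with $z(u)=1$) propagate the eigenvector property to $h_{i'}(u)$ for $i'>n+1$. (Alternatively, one can argue that the $\g$-weight space of $\ze'$ is one-dimensional and is preserved by the weight-zero coefficients of the $h_i(u)$, but some such argument must be made.) Note also that the eigenproperty for the indices $i'>n+1$ is needed already in your Part~1, to get $t_{ii}(u)\ts\ze'=\la_i(u)\ts\ze'$ for $i>n+1$ from the Gauss factorization, so Lemma~\ref{lem:ygaurell} has to enter Part~1 and not only Part~3. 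With this step supplied, the rest of your argument (including the type-$D$ treatment of $e_{n-1,n+1}(u)$ and the shift $P_i(u)=Q_i(u+\text{const})$ matching $r_i=1,\,1/2,\,2,\,1$) agrees with the paper.
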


\begin{proof}Using the isomorphism $\Y^{D}(\g)\cong \Y(\g)$ and the classification results
recalled in Section~\ref{subsec:cry}, we find that
any finite-dimensional irreducible representation $V$ of the algebra $\Y(\g)$
in types $B_n$ and $C_n$
is generated by a vector $\ze'$ such that
\begin{alignat}{2}
&e_{i,i+1}(u)\tss\ze'=0\qquad&&\text{for}\quad i=1,\dots,n,\label{yeiipoze}\\
& h_{i}(u)\ts\ze'=\la_i(u)\ts\ze'\qquad&&\text{for}\quad i=1,\dots,n+1,\label{yhze}
\end{alignat}
for some formal series $\la_i(u)\in1+u^{-1}\CC\big[\big[u^{-1}\big]\big]$. For type $D_n$,
the same conditions hold, except that relation \eqref{yeiipoze} with $i=n$
should be replaced with $e_{n-1,n+1}(u)\tss\ze'=0$. Indeed, for all types,
relation~\eqref{yeiipoze}
is clear from the definition of the highest vector $\ze'$, while
\eqref{yhze} follows from the condition that $\ze'$ is an eigenvector
for all series $\kappa^{}_i(u)$ with $i=1,\dots,n$ and $z(u)\tss \ze'=\ze'$.
Now, Lemma~\ref{lem:ygaurell} and \cite[Lemma~5.15]{jlm:ib} imply that
\begin{alignat}{2}
e_{ij}(u)\tss\ze'&=0\qquad&&\text{for}\quad i<j
\non\\
h_{i}(u)\ts\ze'&=\la_i(u)\ts\ze'\qquad&&\text{for}\quad i=1,\dots,N,
\non
\end{alignat}
for certain formal series $\la_i(u)\in1+u^{-1}\CC\big[\big[u^{-1}\big]\big]$ satisfying
identities \eqref{ylarel}. Finally, note that the values of the parameters~\eqref{ri}
are found by $r_i=1$ for $i=1,\dots,n-1$, while $r_n=1/2$ for type $B_n$, $r_n=2$ for type $C_n$, and $r_n=1$ for type $D_n$, so that
conditions in Part~1 of the theorem follow from~\eqref{qpo}.
\end{proof}

\begin{Corollary}\label{cor:yrepe}
All statements of Theorem~{\rm \ref{thm:yclassi}} hold in the same form for the extended Yangian~$\X(\g)$, except that the value $i=0$ is excluded for the conditions~\eqref{ylarel}.
\end{Corollary}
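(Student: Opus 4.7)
The plan is to leverage the tensor decomposition $\X(\g) \cong \Y(\g) \otimes \Cc$ recalled in Section~\ref{subsev:zuy}, together with Theorem~\ref{thm:yclassi}. Given a finite-dimensional irreducible representation $V$ of $\X(\g)$, Schur's lemma forces the central subalgebra $\Cc$ to act by scalars, and hence $V$ remains irreducible as a $\Y(\g)$-module. The Gaussian data $h_i(u)$, $e_{ij}(u)$, $f_{ij}(u)$ and the associated Drinfeld series $\kappa_i(u)$, $\xi_i^{\pm}(u)$ are intrinsic to the matrix $T(u)$ and its Gauss decomposition, so they have the same meaning in $\X(\g)$ as in $\Y(\g)$. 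Consequently the highest vector $\ze'$ produced in the proof of Theorem~\ref{thm:yclassi} serves as a highest vector for $V$ regarded as an $\X(\g)$-module, with the same eigenvalue series $\la_i(u)$. The ratios appearing in Part~1 are eigenvalues of $\kappa_i(u)$, so they continue to factor through monic Drinfeld polynomials exactly as stated. This yields Part~1; for Part~2 it suffices to take any finite-dimensional irreducible $\Y(\g)$-module realizing the prescribed polynomials and extend scalars by an arbitrary character of $\Cc$ (for instance the trivial one) via $\X(\g) \cong \Y(\g) \otimes \Cc$.

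For Part~3 the central point is that the relations \eqref{yhiipone} of Lemma~\ref{lem:ygaurell} hold inside $\X(\g)$, before passing to any quotient. Applying them to $\ze'$ and using that the strictly upper triangular entries $e_{ij}(u)$ of the Gauss decomposition annihilate $\ze'$, so that $h_i(u)$ acts on $\ze'$ simply as $\la_i(u)$, one reads off the identities \eqref{ylarel} for $i = 1, \dots, n$ when $N = 2n+1$ and for $i = 1, \dots, n-1$ when $N = 2n$. Under the convention $\la_0(u) = \la_{0'}(u) = 1$, the case $i = 0$ of \eqref{ylarel} reduces to $\la_1(u+\ka)\tss \la_{1'}(u) = 1$, which via the first identity of Lemma~\ref{lem:ygaurell} is equivalent to the central series $z(u)$ acting on $\ze'$ as the constant $1$. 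This was automatic in $\Y(\g)$ because of the defining relation $z(u) = 1$, but in $\X(\g)$ the central series $z(u)$ acts on $V$ by an arbitrary scalar series, which is exactly why the case $i = 0$ must be excluded.

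The only step that deserves extra care is the assertion that every $e_{ij}(u)$ with $i < j$ kills $\ze'$, and not just the `simple-root' entries $e_{i,i+1}(u)$ (together with $e_{n-1,n+1}(u)$ in type $D_n$). In the proof of Theorem~\ref{thm:yclassi} this is handled inside $\Y(\g)$ by invoking \cite[Lemma~5.15]{jlm:ib}; since that lemma is a consequence of the defining relations of $\X(\g)$, the same deduction applies verbatim here and no new calculation is required. This transfer of the triangularity argument from $\Y(\g)$ to $\X(\g)$ is the only potentially delicate point, but it is ultimately a matter of bookkeeping rather than a conceptually new obstacle.
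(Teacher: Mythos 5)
Your proposal is correct and follows essentially the same route as the paper: the paper's proof simply notes that the argument for Theorem~\ref{thm:yclassi} extends to $\X(\g)$ and that the $i=0$ case of \eqref{ylarel} is replaced by $z(u)$ acting as multiplication by $\la_1(u+\ka)\tss\la_{1'}(u)$, which is exactly your explanation for excluding $i=0$. Your use of $\X(\g)\cong\Y(\g)\ot\Cc$ and Schur's lemma just makes explicit why the proof ``obviously extends'' (the central series $z(u)$ now acts by an arbitrary scalar series rather than by $1$), and your appeal to Lemma~\ref{lem:ygaurell} and \cite[Lemma~5.15]{jlm:ib} inside $\X(\g)$ matches the paper's reasoning.
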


\begin{proof} The proof of the theorem obviously extends to the algebra $\X(\g)$. Relation~\eqref{ylarel} with $i=0$ is now replaced by the property that the series $z(u)$ acts in the highest weight representation as multiplication by $\la_{1}(u+\ka)\tss \la_{1'}(u)$.
\end{proof}

\begin{Remark}\label{rem:indpr}
It is clear that the arguments used in the proof of the theorem can be reversed, so that the classification theorem for the Yangian representations in types~$B$, $C$ and $D$ proved in~\cite{amr:rp} implies the corresponding results of~\cite{d:nr}.
\end{Remark}

\section{Representations of quantum affine algebras}\label{sec:rqa}

\subsection{Classification results}\label{subsec:cr}

We will suppose that $q$ is a nonzero complex number which is not a root of unity
and set $q_i=q^{r_i}$ for $i=1,\dots,n$, with $r_i$ defined in~\eqref{ri}.
The Cartan matrix of the simple Lie algebra~$\g$ of type~$B_n$,~$C_n$ or~$D_n$
is given by~\eqref{cartan}. We will use the standard notation
\begin{gather*}
[k]_q=\frac{q^k-q^{-k}}{q-q^{-1}}
\end{gather*}
for a nonnegative integer $k$, together with
\begin{gather*}
[k]_q!=\prod_{s=1}^{k}[s]_q\Fand
\begin{bmatrix}k\\r\end{bmatrix}_{q}=\frac{[k]_q!}{[r]_q!\ts[k-r]_q!}.
\end{gather*}

The {\em quantum affine algebra} $\U_q(\whg)$ (with the trivial central charge)
in its Drinfeld presentation
is the associative algebra
with generators
$x_{i,m}^{\pm}$, $a_{i,l}$ and $k_{i}^{\pm1}$ for $i=1,\dots,n$ and
$m,l\in\ZZ$ with $l\ne 0$, subject to the following defining relations:
\begin{gather*}
 k_ik_j=k_jk_i, \qquad k_i\ts a_{j,l}=a_{j,l}\ts k_i,\qquad a_{i,m}\tss a_{j,l}=a_{j,l}\tss a_{i,m},
\\
k_i\ts x_{j,m}^{\pm}\ts k_i^{-1}=q_i^{\pm A_{ij}}x_{j,m}^{\pm},\qquad
 [a_{i,m}, x_{j,l}^{\pm}]=\pm \frac{[mA_{ij}]_{q_i}}{m}\ts x^{\pm}_{j,m+l},
\\
 x^{\pm}_{i,m+1}x^{\pm}_{j,l}-q_i^{\pm A_{ij}}x^{\pm}_{j,l}x^{\pm}_{i,m+1}=
 q_i^{\pm A_{ij}}x^{\pm}_{i,m}x^{\pm}_{j,l+1}-x^{\pm}_{j,l+1}x^{\pm}_{i,m},\\
 \big[x_{i,m}^{+},x_{j,l}^{-}\big] =\delta_{ij}\ts\frac{\psi_{i,m+l}
 -\varphi_{i,m+l}}{q_i-q_i^{-1}},
\\
 \sum_{\pi\in \Sym_{r}}\sum_{l=0}^{r}(-1)^l\begin{bmatrix}r\\l\end{bmatrix}_{q_i}
 x^{\pm}_{i,s_{\pi(1)}}\cdots x^{\pm}_{i,s_{\pi(l)}}
 x^{\pm}_{j,m}x^{\pm}_{i,s_{\pi(l+1)}}\cdots x^{\pm}_{i,s_{\pi(r)}}=0, \qquad i\neq j,
\end{gather*}
where in the last relation we set $r=1-A_{ij}$. The elements
$\psi_{i,m}$ and $\varphi_{i,-m}$ with $m\in \ZZ_+$ are defined by
\begin{gather*}
\psi_i(u):=\sum_{m=0}^{\infty}\psi_{i,m}u^{-m}=k_i\ts\exp\left(\big(q_i-q_i^{-1}\big)
\sum_{s=1}^{\infty}a_{i,s}u^{-s}\right),\\
\varphi_{i}(u):=\sum_{m=0}^{\infty}\varphi_{i,-m}u^{m}=k_i^{-1}\exp\left({-}\big(q_i-q_i^{-1}\big)
\sum_{s=1}^{\infty}a_{i,-s}u^{s}\right),
\end{gather*}
whereas $\psi_{i,m}=\varphi_{i,-m}=0$ for $m<0$.

\begin{Remark}\label{rem:ext}
For the Lie algebras $\g$ of types $C_n$ and $D_n$,
it will be convenient to work with an extended quantum algebra obtained by adjoining
the square roots $k_n^{\pm 1/2}$ and
$(k_{n-1}k_n)^{\pm 1/2}$, respectively. Accordingly, we need to add the defining relations
\begin{gather*}
k_n^{1/2}\tss x_{j,m}^{\pm}\tss k_n^{-1/2}=q^{\pm A_{nj}}x_{j,m}^{\pm}
\end{gather*}
for type $C_n$, and
\begin{gather*}
(k_{n-1}k_n)^{1/2}\tss x_{j,m}^{\pm}\tss (k_{n-1}k_n)^{-1/2}=q^{\pm (A_{n-1,j}+A_{nj})/2}x_{j,m}^{\pm}
\end{gather*}
for type $D_n$, while
the new elements commute with all the remaining
generators.
\end{Remark}

As explained in \cite[Section~12.2]{cp:gq}, the algebra $\U_q(\whg)$ admits a family of
sign automorphisms such that the composition of
any finite-dimensional irreducible representation with a suitable automorphism of this kind is
isomorphic to a {\em type $\vac$ representation}.
Such a representation is generated by a vector~$\ze$ which is
annihilated by all $x_{i,m}^{+}$ and is a simultaneous eigenvector for all~$k_i$
and $a_{i,l}$. Furthermore, if the series
$\Phi_i(u)\in\CC[[u]]$ and $\Psi_i(u)\in\CC\big[\big[u^{-1}\big]\big]$
are defined by
\begin{gather*}
\varphi_{i}(u)\tss \ze=\Phi_i(u)\tss \ze\Fand\psi_i(u)\tss \ze=\Psi_i(u)\tss \ze,
\end{gather*}
then there exist unique polynomials $P_1(u),\dots,P_n(u)$ in $u$ all with constant term $1$
such that
\begin{gather*}
\Phi_i(u)=q_i^{{-}\deg P_i}\ts \frac{P_i\big(u\tss q_i^{2}\big)}{P_i(u)}=
\Psi_i(u),\qquad i=1,\dots,n,
\end{gather*}
where the equalities are understood for the expansions of the rational functions in $u$
as series in~$u$ and~$u^{-1}$, respectively.\footnote{The roles of $u$ and $u^{-1}$
are swapped in our notation as compared to \cite[Section~12.2]{cp:gq}.}
Every $n$-tuple of polynomials $(P_1(u),\dots,P_{n}(u))$ in~$u$,
where each~$P_i(u)$ has constant term~$1$, arises in this way.

The above classification of finite-dimensional irreducible representations of
$\U_q(\whg)$ is valid for any simple Lie algebra~$\g$.
Note also that the corresponding results of \cite[Section~12.2]{cp:gq} apply
to centrally extended algebras $\U_q(\whg)$, which show that the central element
acts trivially in any finite-dimensional representation.
For this reason we only consider the quotients of the quantum affine algebras
by the relation specifying the value of the central element as equal to~$1$.

\subsection{Gaussian generators and isomorphisms}\label{subsec:rm}

To define $R$-matrix realizations of the quantum affine
algebras,
introduce elements of the endomorphism algebra $\End\CC^{N}\ot\End\CC^{N}$ by
\begin{gather*}
P=\sum_{i,j=1}^{N}e_{ij}\otimes e_{ji}\Fand
Q=\sum_{i,j=1}^{N} q^{\bi-\bj}\ts\ve_i\tss\ve_j\ts e_{i'j'}\otimes e_{ij},
\end{gather*}
where
$i'=N+1-i$, as before, and
\begin{gather*}
\big(\ts\overline{1},\overline{2},\dots,\overline{N}\ts\big)
=\begin{cases}
\big(n-\frac{1}{2},\dots,\frac{3}{2},\frac{1}{2},0,-\frac{1}{2},-\frac{3}{2},\dots,-n+\frac{1}{2}\big)
&\quad\text{for}\quad \g=\oa_{2n+1},\\
(n,\dots,2,1,-1,-2,\dots,-n)
&\quad\text{for}\quad \g=\spa_{2n},\\
(n-1,\dots,1,0,0,-1,\dots,-n+1)
&\quad\text{for}\quad \g=\oa_{2n}.
\end{cases}
\end{gather*}
Furthermore, introduce the $R$-matrix by
\begin{gather*}
R =q\ts\sum_{i=1, i\neq i'}^{N}e_{ii}\otimes e_{ii}+e_{n+1,n+1}\ot e_{n+1,n+1}
+\sum_{i\neq j,j'}e_{ii}\otimes e_{jj}
+q^{-1}\sum_{i\neq i'}e_{ii}\otimes e_{i'i'}\\
\hphantom{R=}{}+\big(q-q^{-1}\big)\sum_{i<j}e_{ij}\otimes
e_{ji}-\big(q-q^{-1}\big)\sum_{i>j}q^{\bi-\bj}\ts\ve_i\tss\ve_j\ts e_{i'j'}\otimes e_{ij},
\end{gather*}
where the second term $e_{n+1,n+1}\ot e_{n+1,n+1}$ should be omitted
if $N=2n$. This formula goes back to \cite{b:ts,b:iq,j:qr,rtf:ql}, which appeared along with the spectral
parameter-dependent $R$-matrix
\begin{gather}\label{ru}
R(u)=(u-1)R+\big(q-q^{-1}\big)\left(P-\frac{u\tss\xi-\xi}{u-\xi}\ts Q\right),
\end{gather}
where
\begin{gather*}
\xi=q^{-2\ka}=\begin{cases}q^{-N+2} &\text{if}\ \g=\oa_{N},\\
q^{-N-2} &\text{if}\ \g=\spa_{2n}.
\end{cases}
\end{gather*}
The corresponding two-parameter $R$-matrix $R(u,v)=v\tss R(u/v)$ can be written as
\begin{gather}
R(u,v)=\sum_{i,j=1}^N\tss \big(u\tss q^{\de_{ij}}-v\tss q^{-\de_{ij}}\big)\tss e_{ii}\ot e_{jj}
 +\big(q-\qin\big)\tss\sum_{i,j=1}^N(u\ts\delta_{i<j} +v\ts \delta_{i>j})\ts e_{ij}\ot e_{ji}\nonumber\\
\hphantom{R(u,v)=}{} -\ts\frac{u-v}{u-v\tss\xi}\ts
\sum_{i,j=1}^{N} d_{ij}(u,v)\ts e_{i'j'}\otimes e_{ij},\label{ruv}
\end{gather}
where $\de_{i<j}$ or $\de_{i>j}$ equals $1$ if the subscript inequality
holds, and $0$ otherwise, and
\begin{gather}\label{dij}
d_{ij}(u,v)=\begin{cases} \big(q-q^{-1}\big)\tss\xi\tss v\tss q^{\bi-\bj}\ts\ve_i\tss\ve_j& \text{for}\ i<j,\\
\big(q-q^{-1}\big)\tss u\tss q^{\bi-\bj}\ts\ve_i\tss\ve_j& \text{for}\ i>j,\\
\big(1-q^{-1}\big)(u+v\tss q\tss\xi)& \text{for}\ i=j, \ i\ne i',\\
\big(1-q^{-1}\big)(u\tss q+v\tss\xi)& \text{for}\ i=j=i'.
\end{cases}
\end{gather}
Observe that by setting $d_{ij}(u,v)\equiv 0$ we recover the trigonometric $R$-matrix for type~$A$.
Note also that if $N=2n$, then the equality $i=i'$ is impossible, so that the last case in the definition of $d_{ij}(u,v)$
does not occur.

The {\em quantum affine algebra} $\U^{R}_q(\whg)$ (with the trivial central charge)
is generated by the
elements ${l}^{\pm}_{ij}[\mp m]$ with $1\leqslant i,j\leqslant N$ and $m\in \ZZ_{+}$
subject to the following defining relations. We have\footnote{The condition $i<j$
was erroneously replaced by the opposite inequality in \cite{jlm:ib-c, jlm:ib-bd}.}
\begin{gather}\label{lpmo}
{l}_{ij}^{+}[0]={l}^{-}_{ji}[0]=0\quad\text{for}\quad i<j
\Fand {l}^{+}_{ii}[0]\ts {l}_{ii}^{-}[0]={l}_{ii}^{-}[0]\ts{l}^{+}_{ii}[0]=1,
\end{gather}
while the remaining relations will be written in terms of the formal power series
\begin{gather}
{l}^{\pm}_{ij}(u)=\sum_{m=0}^{\infty}{l}^{\pm}_{ij}[\mp m]\ts u^{\pm m}
\end{gather}
combined into the respective matrices
\begin{gather}
{L}^{\pm}(u)=\sum\limits_{i,j=1}^{N} e_{ij}\ot{l}_{ij}^{\pm}(u)\in
 \End\CC^{N}\ot\U^{R}_q(\whg)\big[\big[u,u^{-1}\big]\big].
\end{gather}
Consider the tensor product algebra $\End\CC^{N}\ot\End\CC^{N}\ot \U^{R}_q(\whg)$
and introduce the series with coefficients in this algebra by
\begin{gather}
{L}^{\pm}_1(u)=\sum\limits_{i,j=1}^{N} e_{ij}\ot 1\ot{l}_{ij}^{\pm}(u)
\Fand
{L}^{\pm}_2(u)=\sum\limits_{i,j=1}^{N} 1 \ot e_{ij}\ot{l}_{ij}^{\pm}(u).
\end{gather}
The defining relations then take the form
\begin{gather}\label{rllss}
{R}(u,v)L^{\pm}_{1}(u)L^{\pm}_2(v)=L^{\pm}_2(v)L^{\pm}_{1}(u){R}(u,v),\\
{R}(u,v)L^{+}_1(u)L^{-}_2(v)=L^{-}_2(v)L^{+}_1(u){R}(u,v),\label{rllmp}
\end{gather}
together with the relations
\begin{gather}\label{unitary}
{L}^{\pm}(u)D{L}^{\pm}(u\tss\xi)^{\tra}D^{-1}=1,
\end{gather}
where $D$ is the diagonal matrix
\begin{gather*}
D=\diag\ts\big[q^{\overline{1}},\dots,q^{\overline{N}}\tss\big].
\end{gather*}

\subsubsection{Isomorphisms}

By applying the Gauss decomposition to ${L}^{+}(u)$ and ${L}^{-}(u)$ introduce matrices
\begin{gather*}
F^{\pm}(u)=\begin{bmatrix}
1&0&\dots&0\ts\\
f^{\pm}_{21}(u)&1&\dots&0\\
\vdots&\vdots&\ddots&\vdots\\
f^{\pm}_{N\ts1}(u)&f^{\pm}_{N\ts2}(u)&\dots&1
\end{bmatrix},
\qquad
E^{\pm}(u)=\begin{bmatrix}
\ts1&e^{\pm}_{12}(u)&\dots&e^{\pm}_{1\ts N}(u)\ts\\
\ts0&1&\dots&e^{\pm}_{2\ts N}(u)\\
\vdots&\vdots&\ddots&\vdots\\
0&0&\dots&1
\end{bmatrix},
\end{gather*}
and $H^{\pm}(u)=\diag\ts\big[h^{\pm}_1(u),\dots,h^{\pm}_{N}(u)\big]$,
such that
\begin{gather}\label{gaussdec}
L^{\pm}(u)=F^{\pm}(u)H^{\pm}(u)E^{\pm}(u).
\end{gather}
Their entries are found by the quasideterminant formulas~\cite{gr:tn}:
\begin{gather}\label{hmqua}
h^{\pm}_i(u)=\begin{vmatrix} l^{\pm}_{1\tss 1}(u)&\dots&l^{\pm}_{1\ts i-1}(u)&l^{\pm}_{1\tss i}(u)\\
 \vdots&\ddots&\vdots&\vdots\\
 l^{\pm}_{i-1\ts 1}(u)&\dots&l^{\pm}_{i-1\ts i-1}(u)&l^{\pm}_{i-1\ts i}(u)\\[0.2em]
 l^{\pm}_{i\tss 1}(u)&\dots&l^{\pm}_{i\ts i-1}(u)&\boxed{l^{\pm}_{i\tss i}(u)}\\
 \end{vmatrix},\qquad i=1,\dots,N,
\end{gather}
whereas
\begin{gather}\label{eijmlqua}
e^{\pm}_{ij}(u)=h^{\pm}_i(u)^{-1}\ts\begin{vmatrix} l^{\pm}_{1\tss 1}(u)&\dots&
l^{\pm}_{1\ts i-1}(u)&l^{\pm}_{1\ts j}(u)\\
 \vdots&\ddots&\vdots&\vdots\\
 l^{\pm}_{i-1\ts 1}(u)&\dots&l^{\pm}_{i-1\ts i-1}(u)&l^{\pm}_{i-1\ts j}(u)\\[0.2em]
 l^{\pm}_{i\tss 1}(u)&\dots&l^{\pm}_{i\ts i-1}(u)&\boxed{l^{\pm}_{i\tss j}(u)}\\
 \end{vmatrix}
\end{gather}
and
\begin{gather*}
f^{\pm}_{ji}(u)=\begin{vmatrix} l^{\pm}_{1\tss 1}(u)&\dots&l^{\pm}_{1\ts i-1}(u)&l^{\pm}_{1\tss i}(u)\\
 \vdots&\ddots&\vdots&\vdots\\
 l^{\pm}_{i-1\ts 1}(u)&\dots&l^{\pm}_{i-1\ts i-1}(u)&l^{\pm}_{i-1\ts i}(u)\\[0.2em]
 l^{\pm}_{j\ts 1}(u)&\dots&l^{\pm}_{j\ts i-1}(u)&\boxed{l^{\pm}_{j\tss i}(u)}\\
 \end{vmatrix}\ts h^{\pm}_i(u)^{-1}
\end{gather*}
for $1\leqslant i<j\leqslant N$.
Set
\begin{gather*}
X^{+}_i(u)=e^{+}_{i,i+1}(u)-e_{i,i+1}^{-}(u),
\qquad X^{-}_i(u)=f^{+}_{i+1,i}(u)-f^{-}_{i+1,i}(u),
\end{gather*}
for $i=1,\dots, n-1$, and
\begin{gather*}
X^{+}_n(u)=\begin{cases}
 e^{+}_{n,n+1}(u)-e_{n,n+1}^{-}(u)&\text{for types $B_n$ and $C_n$}, \\
 e^{+}_{n-1,n+1}(u)-e_{n-1,n+1}^{-}(u)&\text{for type $D_n$},
 \end{cases}\\
X^{-}_n(u)=\begin{cases}
 f^{+}_{n+1,n}(u)-f^{-}_{n+1,n}(u)&\text{for types $B_n$ and $C_n$}, \\
 f^{+}_{n+1,n-1}(u)-f^{-}_{n+1,n-1}(u)&\text{for type $D_n$}.
 \end{cases}
\end{gather*}
Combine the generators $x^{\pm}_{i,m}$ of the algebra $\U_q(\whg)$ into the series
\begin{gather*}
x^{\pm}_{i}(u)=\sum_{m\in\ZZ}x^{\pm}_{i,m}\ts u^{-m}.
\end{gather*}
By the Main Theorems of \cite{jlm:ib-c} and \cite{jlm:ib-bd},
the maps
\begin{gather}
x^{\pm}_{i}(u) \mapsto \big(q_i-q_i^{-1}\big)^{-1}X^{\pm}_i\big(uq^i\big),\nonumber\\
\psi_{i}(u) \mapsto h^{-}_{i+1}\big(uq^i\big)\ts h^{-}_{i}\big(uq^i\big)^{-1},\nonumber\\
\varphi_{i}(u) \mapsto h^{+}_{i+1}\big(uq^i\big)\ts h^{+}_{i}\big(uq^i\big)^{-1},\label{isoma}
\end{gather}
for $i=1,\dots,n-1$, and
\begin{gather*}
x^{\pm}_{n}(u)\mapsto
\begin{cases}\big(q_n-q_n^{-1}\big)^{-1}[2]_{q_n}^{-1/2}X^{\pm}_n\big(uq^{n}\big)
 &\text{for type $B_n$}, \\
\big(q_n-q_n^{-1}\big)^{-1}X^{\pm}_n\big(uq^{n+1}\big)
 &\text{for type $C_n$}, \\
\big(q_n-q_n^{-1}\big)^{-1}X^{\pm}_n\big(uq^{n-1}\big) &\text{for type $D_n$},
\end{cases}
\\
\psi_{n}(u)\mapsto
\begin{cases}
h^{-}_{n+1}\big(uq^{n}\big)\ts h^{-}_{n}\big(uq^{n}\big)^{-1} &\text{for type $B_n$}, \\
h^{-}_{n+1}\big(uq^{n+1}\big)\ts h^{-}_{n}\big(uq^{n+1}\big)^{-1} &\text{for type $C_n$}, \\
h^{-}_{n+1}\big(uq^{n-1}\big)\ts h^{-}_{n-1}\big(uq^{n-1}\big)^{-1} &\text{for type $D_n$},
\end{cases}
\\
\varphi_{n}(u)\mapsto
\begin{cases}
h^{+}_{n+1}\big(uq^{n}\big)\ts h^{+}_{n}\big(uq^{n}\big)^{-1} &\text{for type $B_n$}, \\
h^{+}_{n+1}\big(uq^{n+1}\big)\ts h^{+}_{n}\big(uq^{n+1}\big)^{-1} &\text{for type $C_n$}, \\
h^{+}_{n+1}\big(uq^{n-1}\big)\ts h^{+}_{n-1}\big(uq^{n-1}\big)^{-1} &\text{for type $D_n$},
\end{cases}
\end{gather*}
define an isomorphism $\U_q(\whg)\to \U^{R}_q(\whg)$.

\subsubsection{Central elements of the extended quantum affine algebra}\label{subsev:zu}

The {\em extended quantum affine algebra} is defined by the same
presentation as the algebra $\U^{R}_q(\whg)$, except that the relation~\eqref{unitary} is omitted. It was shown in \cite{jlm:ib-c,jlm:ib-bd},
that this algebra can be explicitly described in terms of the Gaussian generators
by producing complete sets of relations. We will
denote the extended algebra by $\U^{\ext}_q(\whg)$ and identify
its $R$-matrix and Gaussian presentations.

The maps described above can be understood as an embedding $\iota\colon \U_q(\whg)\hra \U^{\ext}_q(\whg)$
so that we can regard $\U_q(\whg)$ as a subalgebra of $\U^{\ext}_q(\whg)$.
This subalgebra can also be described with the use of the multiplication
automorphisms
\begin{gather}\label{muf}
\mu_f\colon \ \U^{\ext}_q(\whg)\to \U^{\ext}_q(\whg),\qquad L^{\pm}(u)\mapsto f^{\pm}(u)\tss L^{\pm}(u),
\end{gather}
where
\begin{gather}\label{furu}
f^{\pm}(u)=\sum_{m=0}^{\infty}f^{\pm}[\mp m]\tss u^{\pm m},\qquad f^{\pm}[\mp m]\in\CC,
\qquad f^{+}[0]\tss f^{-}[0]=1.
\end{gather}
Namely, the image of $\U_q(\whg)$ under the embedding $\iota$ consists of the elements in
$\U^{\ext}_q(\whg)$ which are fixed by all automorphisms of the form~\eqref{muf}.

All coefficients of the series $z^{\pm}(u)$ given by
\begin{gather*}
z^{\pm}(u)=\begin{cases}\displaystyle\prod_{i=1}^{n}{h}^{\pm}_{i}\big(u\xi q^{2i}\big)^{-1}
\prod_{i=1}^{n}{h}^{\pm}_{i}\big(u\xi q^{2i-2}\big)\ts\cdot {h}^{\pm}_{n+1}(u)\ts{h}^{\pm}_{n+1}(uq)
&\text{for}\ N=2n+1,\\
\displaystyle\prod_{i=1}^{n-1}{h}^{\pm}_{i}\big(u\xi q^{2i}\big)^{-1}
\prod_{i=1}^{n}{h}^{\pm}_{i}\big(u\xi q^{2i-2}\big)\cdot{h}^{\pm}_{n+1}(u)
& \text{for} \ N=2n,
\end{cases}
\end{gather*}
belong to the center of the algebra $\U^{\ext}_q(\whg)$.
If $N=2n$ then the constant terms of the series~$z^{\pm}(u)$ are the
central elements $z^{\pm}[0]=l^{\pm}_{nn}[0]\ts l_{n'n'}^{\pm}[0]$. In this case
we will extend this algebra
by adjoining the square roots $z^{\pm}[0]^{1/2}$.
Then in all three cases there exist power series
$\ze^{\pm}(u)$ with coefficients in the center of $\U^{\ext}_q(\whg)$
such that $\ze^{\pm}(u)\ts\ze^{\pm}(u\tss\xi)=z^{\pm}(u)$.
Under the automorphism~\eqref{muf} we have
\begin{gather*}
\mu_f\colon \ \ze^{\pm}(u)\mapsto f^{\pm}(u)\tss \ze^{\pm}(u).
\end{gather*}
This implies that the coefficients of the entries of the matrices
$\ze^{\pm}(u)^{-1}\tss L^{\pm}(u)$ belong to the subalgebra
$\U_q(\whg)\subset \U^{\ext}_q(\whg)$. Therefore, if
$\Cc$ denotes the
subalgebra of $\U^{\ext}_q(\whg)$ generated by the coefficients
of the series $\ze^{\pm}(u)$, then we
have the tensor product decomposition
\begin{gather*}
\U^{\ext}_q(\whg) = \U_q(\whg)\otimes \Cc,
\end{gather*}
assuming that the algebra $\U_q(\whg)$ is extended by adjoining
square roots in types $C_n$ and $D_n$ as in Remark~\ref{rem:ext}.
In the algebra $\U^{\ext}_q(\whg)$ we have
\begin{gather}\label{unize}
{L}^{\pm}(u)D{L}^{\pm}(u\ts\xi)^{\tra}D^{-1}=z^{\pm}(u),
\end{gather}
so that the subalgebra $\U_q(\whg)$ can also be regarded as the quotient of $\U^{\ext}_q(\whg)$
by the relations $z^{\pm}(u)=1$. The following relations are implied by \eqref{unize},
and they were essentially derived in Section~4.5 of \cite{jlm:ib-c} and \cite{jlm:ib-bd}.

\begin{Lemma}\label{lem:gaurell}
In the algebra $\U^{\ext}_q(\whg)$ we have
\begin{gather*}
h^{\pm}_1(u\xi)\tss h^{\pm}_{1'}(u)=z^{\pm}(u),\label{hone}\\
h^{\pm}_i(u\xi q^{2i})\tss h^{\pm}_{i'}(u)=h^{\pm}_{i+1}(u\xi q^{2i})\tss h^{\pm}_{(i+1)'}(u),
\end{gather*}
where $i=1,\dots,n$ for $N=2n+1$, and $i=1,\dots,n-1$ for $N=2n$.
\end{Lemma}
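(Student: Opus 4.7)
Both identities are extracted from the unitarity relation \eqref{unize},
\begin{gather*}
L^{\pm}(u)\tss D\tss L^{\pm}(u\xi)^{\tra}\tss D^{-1}=z^{\pm}(u),
\end{gather*}
combined with the Gauss decomposition \eqref{gaussdec}. For the first identity, my plan is to rewrite this relation in the inverted form $L^{\pm}(u)^{-1}=z^{\pm}(u)^{-1}\tss D\tss L^{\pm}(u\xi)^{\tra}\tss D^{-1}$ and compare the $(N,N)$ entries of the two sides. Since $L^{\pm}(u)^{-1}=E^{\pm}(u)^{-1}H^{\pm}(u)^{-1}F^{\pm}(u)^{-1}$ and both $E^{\pm}(u)^{-1}$ and $F^{\pm}(u)^{-1}$ are unipotent, the last row of the former and the last column of the latter are the standard $N$-th unit vectors, which collapses the $(N,N)$ entry of the left-hand side to $h^{\pm}_{N}(u)^{-1}=h^{\pm}_{1'}(u)^{-1}$. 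On the other side, using $N'=1$, $\ve_1=1$, $\overline{N}=-\overline{1}$ and the convention $e_{ij}^{\tra}=\ve_i\tss\ve_j\tss e_{j',i'}$, the $(N,N)$ entry of $D\tss L^{\pm}(u\xi)^{\tra}\tss D^{-1}$ simplifies to $l^{\pm}_{1,1}(u\xi)=h^{\pm}_1(u\xi)$. Equating the two expressions and multiplying by $h^{\pm}_{1'}(u)$ produces the first identity.

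For the general identity $h^{\pm}_i(u\xi\tss q^{2i})\tss h^{\pm}_{i'}(u)=h^{\pm}_{i+1}(u\xi\tss q^{2i})\tss h^{\pm}_{(i+1)'}(u)$, I would proceed inductively, applying the same argument to progressively smaller matrices. At step $i$, one constructs a matrix $\ol L^{\pm,[i]}(u)$ of size $(N-2i)\times(N-2i)$ obtained from $L^{\pm}(u)$ by quasideterminant elimination of the outermost $i$ rows and columns on each side, so that the diagonal part of its Gauss decomposition is $\diag\bigl[h^{\pm}_{i+1}(u),\dots,h^{\pm}_{(i+1)'}(u)\bigr]$. The key fact is that $\ol L^{\pm,[i]}(u)$ again satisfies RLL-type relations with a smaller $R$-matrix of the same orthogonal or symplectic form as \eqref{ruv} and a unitarity identity of the shape \eqref{unize} with the parameter $\xi$ replaced by $\xi\tss q^{2i}$. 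Applying the first-identity argument to $\ol L^{\pm,[i]}(u)$ produces $h^{\pm}_{i+1}(u\xi\tss q^{2i})\tss h^{\pm}_{(i+1)'}(u)=\ol z^{\pm,[i]}(u)$ for an appropriate central series, and a comparison with the analogous identity at level $i-1$ after an elementary shift in the spectral parameter yields the stated equality.

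The main technical difficulty is to verify that the reduction from $L^{\pm}(u)$ to $\ol L^{\pm,[i]}(u)$ preserves the orthogonal or symplectic $R$-matrix structure with the parameter $\xi$ correctly rescaled to $\xi\tss q^{2i}$. The trigonometric terms in \eqref{ruv} restrict in the expected way, but the $\xi$-dependent $Q$-term described by \eqref{ru}--\eqref{dij} requires a careful bookkeeping, since the contributions of the eliminated rows and columns must be absorbed into the rescaling of $\xi$. This is precisely the computation carried out in Section~4.5 of \cite{jlm:ib-c} for type $C$ and in Section~4.5 of \cite{jlm:ib-bd} for types $B$ and $D$.
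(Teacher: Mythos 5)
Your treatment of the first identity is correct: rewriting \eqref{unize} as $D\tss L^{\pm}(u\xi)^{\tra}D^{-1}=L^{\pm}(u)^{-1}\tss z^{\pm}(u)$ and comparing $(N,N)$ entries indeed gives $h^{\pm}_{1'}(u)^{-1}=z^{\pm}(u)^{-1}\tss h^{\pm}_1(u\xi)$, hence $h^{\pm}_1(u\xi)\tss h^{\pm}_{1'}(u)=z^{\pm}(u)$; this matches the way these relations are extracted in the sources (the paper itself gives no computation here, it only states that the relations follow from \eqref{unize} and refers to Section~4.5 of \cite{jlm:ib-c} and \cite{jlm:ib-bd}). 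The reduction idea for general $i$ is also the right one: the quasideterminant elimination of the outer $i$ rows and columns is Proposition~4.2 of those papers (not their Section~4.5, which is where the $h$-relations themselves are derived), and the parameter of the reduced algebra is indeed $\xi q^{2i}$, so your first-identity argument applied at level $i$ gives $h^{\pm}_{i+1}(u\xi q^{2i})\tss h^{\pm}_{(i+1)'}(u)=z^{\pm,[i]}(u)$, the central series of the reduced algebra.

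The gap is in your concluding step. The lemma requires $z^{\pm,[i]}(u)=h^{\pm}_i(u\xi q^{2i})\tss h^{\pm}_{i'}(u)$, and this does not follow from the level-$(i-1)$ identity $h^{\pm}_i(u\xi q^{2i-2})\tss h^{\pm}_{i'}(u)=z^{\pm,[i-1]}(u)$ by ``an elementary shift in the spectral parameter'': substituting $u\mapsto uq^{2}$ yields $h^{\pm}_i(u\xi q^{2i})\tss h^{\pm}_{i'}(uq^{2})=z^{\pm,[i-1]}(uq^{2})$, which shifts the second factor as well and involves the wrong central series; no single shift can change the ratio of the two arguments from $\xi q^{2i-2}$ to $\xi q^{2i}$. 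What is actually needed is the link between consecutive central series, $z^{\pm,[i]}(u)=h^{\pm}_i(u\xi q^{2i})\tss h^{\pm}_i(u\xi q^{2i-2})^{-1}z^{\pm,[i-1]}(u)$, which is essentially equivalent to the identity you are proving, so as written the argument is circular at this point. To close it you need an independent input: either the explicit Gaussian product formula for $z^{\pm}(u)$ recalled in Section~\ref{subsev:zu}, applied to each reduced algebra, together with the commutation properties of the same-sign series $h^{\pm}_j(u)$ needed to compare the two products, or the direct entrywise analysis of \eqref{unize} (taking entries beyond the $(N,N)$ one and using the expressions of $e^{\pm}_{(i+1)'i'}$, $f^{\pm}_{i'(i+1)'}$ through $e^{\pm}_{i\tss i+1}$, $f^{\pm}_{i+1\tss i}$) carried out in Section~4.5 of \cite{jlm:ib-c} and \cite{jlm:ib-bd}, which is exactly what the paper cites in place of a proof.
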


\begin{Remark}\label{rem:trans}
Note that for the parameters $d_{ij}(u,v)$ defined in
\eqref{dij} we have $d_{j'i'}(u,v)=d_{ij}(u,v)$. Therefore
the $R$-matrix \eqref{ruv} possesses the symmetry property
\begin{gather}\label{rtra}
R^{T_1T_2}(u,v)=R_{21}(u,v),
\end{gather}
where $R_{21}(u,v)=PR(u,v)P$, while
$T$ denotes the standard matrix
transposition with $e_{ij}^{T}=e_{ji}$ and $T_a$ is the partial transposition
applied to the $a$-th copy of the endomorphism algebra $\End\CC^N$.
We can use the $R$-matrix $R_{21}(u,v)$ instead of $R(u,v)$
to define the {\em extended quantum affine algebra} $\wt \U^{\ext}_q(\whg)$
in a way similar to $\U^{\ext}_q(\whg)$, by using the relations
\begin{gather*}
{R}_{21}(u,v)\wt L^{\pm}_{1}(u)\wt L^{\pm}_2(v)=\wt L^{\pm}_2(v)\wt L^{\pm}_{1}(u){R}_{21}(u,v),\\
{R}_{21}(u,v)\wt L^{+}_1(u)\wt L^{-}_2(v)=\wt L^{-}_2(v)\wt L^{+}_1(u){R}_{21}(u,v),
\end{gather*}
where we impose the opposite zero mode conditions
\begin{gather*}
{\wt l}_{ij}^{+}[0]={\wt l}^{-}_{ji}[0]=0\quad\text{for}\quad i>j
\Fand {\wt l}^{+}_{ii}[0]\ts {\wt l}_{ii}^{-}[0]={\wt l}_{ii}^{-}[0]\ts{\wt l}^{+}_{ii}[0]=1.
\end{gather*}
The symmetry property \eqref{rtra} implies that the mapping
\begin{gather*}
\wt L^{\pm}(u)\mapsto L^{\pm}(u)^T
\end{gather*}
defines an anti-isomorphism $\wt \U^{\ext}_q(\whg)\to \U^{\ext}_q(\whg)$.
Using the definition of quasideterminants, we obtain the following formulas
for the images of the respective Gaussian generators
\begin{gather*}
\wt h^{\pm}_{i}(u)\mapsto h^{\pm}_{i}(u),\qquad
\wt e^{\ts\pm}_{ij}(u)\mapsto f^{\pm}_{ji}(u)\Fand
\wt f^{\ts\pm}_{ij}(u)\mapsto e^{\pm}_{ji}(u).
\end{gather*}
\end{Remark}

\subsubsection{Hopf algebra structure}\label{subsubsec:ha}

The quantum affine algebra $\U^{\ext}_q(\whg)$ possesses a Hopf algebra structure defined by the coproduct
\begin{gather}\label{Delta}
\Delta\colon \ l^{\pm}_{ij}(u)\mapsto \sum_{k=1}^N l^{\pm}_{ik}(u)\ot l^{\pm}_{kj}(u),
\end{gather}
the antipode
\begin{gather}\label{S}
S\colon \ L^{\pm}(u)\mapsto L^{\pm}(u)^{-1}
\end{gather}
and the counit
\begin{gather}\label{cou}
\ve\colon \ L^{\pm}(u)\mapsto 1.
\end{gather}

\begin{Proposition}\label{prop:zuho}
In the Hopf algebra $\U^{\ext}_q(\whg)$ we have
\begin{gather*}
\Delta\colon \ z^{\pm}(u)\mapsto z^{\pm}(u)\ot z^{\pm}(u)
\end{gather*}
and
\begin{gather}\label{sant}
S\colon \ z^{\pm}(u)\mapsto z^{\pm}(u)^{-1}.
\end{gather}
In particular, $\U_q(\whg)$ is a Hopf subalgebra of $\U^{\ext}_q(\whg)$.
Moreover,
\begin{gather}\label{sqs}
S^2\colon \ L^{\pm}(u)\mapsto \frac{z^{\pm}(u)}{z^{\pm}(u\tss\xi)}\tss L^{\pm}\big(u\tss \xi^2\big).
\end{gather}
\end{Proposition}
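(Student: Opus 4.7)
The approach to the coproduct identity is to apply the algebra homomorphism $\mathrm{id}\ot\Delta$ to both sides of the unitary relation \eqref{unize}, working in $\End\CC^N\ot\U^{\ext}_q(\whg)\ot\U^{\ext}_q(\whg)$. By \eqref{Delta}, the image of $L^{\pm}(u)$ factors as the matrix product $L^{\pm}_{[1]}(u)\tss L^{\pm}_{[2]}(u)$, whose entries sit in the first and second tensor slots of the algebra respectively. Since $\tra$ is an anti-homomorphism for matrix multiplication, the expression $\bigl(L^{\pm}_{[1]}(u\xi)\tss L^{\pm}_{[2]}(u\xi)\bigr)^{\tra}$ rewrites as $L^{\pm}_{[2]}(u\xi)^{\tra}\tss L^{\pm}_{[1]}(u\xi)^{\tra}$. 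Inserting $D^{-1}D$ between these two transposed factors would isolate the subexpression $L^{\pm}_{[2]}(u)\tss D\tss L^{\pm}_{[2]}(u\xi)^{\tra}\tss D^{-1}$, which by \eqref{unize} applied in the second tensor slot equals $1\ot z^{\pm}(u)$ times the identity matrix. Because its algebra entries lie in the opposite tensor slot, this factor commutes freely through $L^{\pm}_{[1]}$ and $D$; a second use of \eqref{unize} in the first slot then delivers the product $z^{\pm}(u)\ot z^{\pm}(u)$.

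The antipode identity \eqref{sant} follows immediately from the previous step via the Hopf algebra axiom $m\circ(S\ot\mathrm{id})\circ\Delta=\ve\cdot\vac$, giving $S(z^{\pm}(u))\tss z^{\pm}(u)=\ve(z^{\pm}(u))$. Applying $\mathrm{id}\ot\ve$ to \eqref{unize} and using $\ve(L^{\pm}(u))=I$ from \eqref{cou} shows $\ve(z^{\pm}(u))=1$, so $S(z^{\pm}(u))=z^{\pm}(u)^{-1}$. For the Hopf-subalgebra assertion, I would verify that the coefficients of $z^{\pm}(u)-1$ generate a Hopf ideal of $\U^{\ext}_q(\whg)$; this is immediate from the identities $\Delta(z-1)=(z-1)\ot z+1\ot(z-1)$, $S(z-1)=-z^{-1}(z-1)$ and $\ve(z-1)=0$. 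Since $\U_q(\whg)$ is precisely the quotient of $\U^{\ext}_q(\whg)$ by this ideal, the quotient map is a Hopf algebra homomorphism, and the embedding $\iota$ exhibits $\U_q(\whg)$ as a Hopf subalgebra.

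For \eqref{sqs}, the plan is to invert \eqref{unize} to write $L^{\pm}(u)^{-1}=z^{\pm}(u)^{-1}\tss D\tss L^{\pm}(u\xi)^{\tra}\tss D^{-1}$, so that by \eqref{S} the action of $S$ on matrix entries reads
\begin{gather*}
S\bigl(l^{\pm}_{ij}(u)\bigr)=z^{\pm}(u)^{-1}\tss d_i\tss \bigl(L^{\pm}(u\xi)^{\tra}\bigr)_{ij}\tss d_j^{-1},\qquad d_i=q^{\bar i}.
\end{gather*}
Applying $S$ once more entry by entry and using that $S$ reverses products, that $z^{\pm}(u)$ is central with $S(z^{\pm}(u)^{-1})=z^{\pm}(u)$ by \eqref{sant}, and the symmetry $\bar{i\tss'}=-\bar i$ which gives $d_i\tss d_{i'}=1$, the $D$-factors and the pair of $\tra$-operations should telescope to yield the stated formula.

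The main technical obstacle lies in the noncommutative bookkeeping for the coproduct step: one must recognize that the central expression $1\ot z^{\pm}(u)$ arising at a middle matrix position within a quadruple matrix product may be commuted freely past factors whose algebra entries sit in a disjoint tensor slot, and it is precisely this central commutation that converts what looks like a crossed product into the clean tensor square $z^{\pm}(u)\ot z^{\pm}(u)$.
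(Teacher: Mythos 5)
Your treatment of the coproduct, of \eqref{sant}, and of \eqref{sqs} follows essentially the same route as the paper (which only sketches these steps): apply $\Delta$ to \eqref{unize}, use that the two factors $L^{\pm}_{[1]}$, $L^{\pm}_{[2]}$ have entries in disjoint tensor slots so that the transposed product can be re-ordered and the central factor $1\ot z^{\pm}(u)$ extracted, then get $S(z^{\pm}(u))=z^{\pm}(u)^{-1}$ from group-likeness and $\ve(z^{\pm}(u))=1$, and obtain $S^2$ from the inverted relation $L^{\pm}(u)^{-1}=z^{\pm}(u)^{-1}DL^{\pm}(u\tss\xi)^{\tra}D^{-1}$. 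One caution on the last step: the two successive applications of the inverted relation contribute the factors $q^{\bi-\bj}$ and $q^{\bar{\jmath'}-\bar{\imath'}}=q^{\bi-\bj}$, i.e., they reinforce rather than pair off as $d_i\tss d_{i'}=1$, so the asserted telescoping of the $D$-factors is not automatic and depends on the precise convention for $\tra$ in \eqref{unitary}; this bookkeeping should be carried out explicitly rather than declared.

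The genuine gap is the Hopf-subalgebra assertion. Showing that the coefficients of $z^{\pm}(u)-1$ generate a Hopf ideal proves that the quotient $\U^{\ext}_q(\whg)/\big(z^{\pm}(u)-1\big)$ carries a Hopf structure and that the projection is a Hopf algebra morphism; it does \emph{not} prove that the embedded subalgebra $\iota\big(\U_q(\whg)\big)\subset\U^{\ext}_q(\whg)$ is stable under the coproduct and antipode of the ambient algebra, which is what the proposition claims. A subalgebra mapping isomorphically onto a Hopf quotient need not be a subcoalgebra, and here the obstruction is visible: $\Delta\big(l^{\pm}_{ij}(u)\big)=\sum_k l^{\pm}_{ik}(u)\ot l^{\pm}_{kj}(u)$ does not lie in $\U_q(\whg)\ot\U_q(\whg)$, since the individual series $l^{\pm}_{ij}(u)$ are not elements of the subalgebra. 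The paper closes exactly this gap by using the square-root series $\ze^{\pm}(u)$: one needs $\Delta\colon\ze^{\pm}(u)\mapsto\ze^{\pm}(u)\ot\ze^{\pm}(u)$ (group-likeness of $\ze^{\pm}$, not merely of $z^{\pm}$), and then, because $\U_q(\whg)$ is generated inside $\U^{\ext}_q(\whg)$ by the coefficients of $\ze^{\pm}(u)^{-1}L^{\pm}(u)$ (the $\mu_f$-invariants of \eqref{muf}), the computation $\Delta\big(\ze^{\pm}(u)^{-1}l^{\pm}_{ij}(u)\big)=\sum_k\ze^{\pm}(u)^{-1}l^{\pm}_{ik}(u)\ot\ze^{\pm}(u)^{-1}l^{\pm}_{kj}(u)$ shows $\Delta\big(\U_q(\whg)\big)\subset\U_q(\whg)\ot\U_q(\whg)$, with a similar argument for $S$. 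Your argument as written is missing this idea and therefore does not establish the "in particular" clause.
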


\begin{proof}
The formulas for the images of the series $z^{\pm}(u)$ under the maps $\Delta$ and $S$ follow
easily from the definition of $z^{\pm}(u)$ and the Hopf algebra axioms.
For the proof of \eqref{sqs} use the relation
\begin{gather*}
L^{\pm}(u)^{-1}=z^{\pm}(u)^{-1}\tss D{L}^{\pm}(u\tss\xi)^{\tra}D^{-1}
\end{gather*}
and apply \eqref{sant}. Furthermore,
for the power series
$\ze^{\pm}(u)$ we have
\begin{gather*}
\Delta\colon \ \ze^{\pm}(u)\mapsto \ze^{\pm}(u)\ot \ze^{\pm}(u),
\end{gather*}
and so
\begin{gather*}
\Delta\big(\U_q(\whg)\big)\subset \U_q(\whg)\ot \U_q(\whg),\qquad S\big(\U_q(\whg)\big)\subset \U_q(\whg),
\end{gather*}
thus proving that $\U_q(\whg)$ is a Hopf subalgebra of $\U^{\ext}_q(\whg)$.
\end{proof}

\subsubsection{Consistency with the triangular decomposition}
\label{subsubsec:td}

Denote by $\U_q(\whg)^+$ (respectively, $\U_q(\whg)^-$) the subalgebra of $\U_q(\whg)$
generated by the elements $x_{i,m}^+$ (respectively, $x_{i,m}^-$), and denote by $\U_q(\whg)^0$
the subalgebra generated by $k_i^{\pm 1}$ and $a_{i,l}$ together with
the additional elements in types $C_n$ and $D_n$, introduced in Remark~\ref{rem:ext}.
The multiplication map provides the triangular decomposition isomorphism
\begin{gather*}
\U_q(\whg)^-\ot \U_q(\whg)^0\ot \U_q(\whg)^+\cong \U_q(\whg),
\end{gather*}
as proved in \cite{b:bg}; see also \cite{h:rq} for a generalization
to quantum affinizations of symmetrizable
Kac--Moody algebras.

Here we aim to establish a key property of the Gauss decomposition by showing that
it is consistent with the triangular decomposition (see Proposition~\ref{prop:consi} below).
We will rely on a few relations for the Gaussian generators
described in the following lemmas.

We will use a standard notation $[x,y]_q=x\tss y-q\tss y\tss x$
for $q$-commutators and begin
by proving some $q$-commutator formulas; cf.\ \cite[Lemma~5.6]{kpt:ob}.

\begin{Lemma}\label{lem:kpt}For any $k<i<j<k'$ such that $i\ne j'$ we have
\begin{gather*}
\big[e^{\pm}_{k\tss i}(u),e^{-}_{i\tss j}[0]\big]_{q} =\big(1 - q^2\big)\ts e^{\pm}_{k\tss j}(u).
\end{gather*}
\end{Lemma}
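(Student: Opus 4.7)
The plan is to derive this $q$-commutator identity by extracting appropriate scalar relations from the $RLL$ equations \eqref{rllss}--\eqref{rllmp} and then translating them from the matrix entries $l^{\pm}_{ab}(u)$ to the Gaussian generators via the quasideterminant formulas \eqref{hmqua}--\eqref{eijmlqua}. The zero-mode conditions \eqref{lpmo} force $L^{-}[0]$ to be upper triangular, so the Gauss decomposition at $u^{0}$ collapses to $L^{-}[0]=H^{-}[0]\ts E^{-}[0]$ and gives $e^{-}_{ij}[0]=h^{-}_{i}[0]^{-1}\ts l^{-}_{ij}[0]$ for $i<j$. It therefore suffices to control the $q$-commutator of $l^{\pm}_{ki}(u)$ with $l^{-}_{ij}[0]$, together with the commutation of $l^{\pm}_{ki}(u)$ past the diagonal zero mode $l^{-}_{ii}[0]$.

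First I would extract from \eqref{rllss} and \eqref{rllmp} the scalar identity governing $l^{\pm}_{ki}(u)\ts l^{-}_{ij}(v)-q\ts l^{-}_{ij}(v)\ts l^{\pm}_{ki}(u)$ by reading off the coefficient of a suitable basis element $e_{k\,\cdot}\ot e_{i\,\cdot}$ in $\End\CC^{N}\ot\End\CC^{N}$. The hypothesis $k<i<j<k'$ together with $i\ne j'$ is used precisely at this step: the $Q$-type summands $-(u-v)(u-v\xi)^{-1}\ts d_{ab}(u,v)\ts e_{a'b'}\ot e_{ab}$ in \eqref{ruv} can only contribute to this coefficient when one of the primed partners $k'$, $i'$, $j'$ meets an unprimed index from our range, and the hypothesis is exactly what excludes every such coincidence. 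The calculation therefore collapses to the trigonometric type $A$ $R$-matrix form observed after \eqref{dij} and yields a relation of the same shape as in \cite[Lemma~5.6]{kpt:ob}.

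Specializing $v$ to pick out the constant term $l^{-}_{ij}[0]$, and using the controlled $q$-commutation of $l^{\pm}_{ki}(u)$ with $l^{-}_{ii}[0]$ (again read off from \eqref{rllss}--\eqref{rllmp}), the resulting identity rearranges, after applying the quasideterminant formula \eqref{eijmlqua} to convert $l$-entries into the Gaussian $e$-generators, into the desired statement. A short induction on $i-k$ with base case $i=k+1$, where the relevant quasideterminants reduce to simple ratios of individual entries, handles the passage from the zero-mode matrix entries to the quasideterminant $e^{\pm}_{ki}(u)$ for general $i$.

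The main obstacle will be the bookkeeping of auxiliary terms created by the inverse factors $h^{\pm}_{i}(u)^{-1}$ inside the quasideterminants when they are commuted past $l^{-}_{ij}[0]$; the structural restriction $k<i<j<k'$ with $i\ne j'$ is precisely what guarantees that all such terms either cancel pairwise or collapse into the single contribution $(1-q^{2})\ts e^{\pm}_{kj}(u)$, with no surviving corrections from the orthogonal or symplectic tails of the $R$-matrix.
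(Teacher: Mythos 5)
Your first half is sound and coincides with the paper's computation: extracting the entrywise consequences of \eqref{rllss}--\eqref{rllmp}, observing that the hypothesis $k<i<j<k'$, $i\ne j'$ kills the $Q$-type ($\de_{a\tss b'}$) contributions, using the zero-mode triangularity from \eqref{lpmo} to write $l^{-}_{i\tss j}[0]=l^{-}_{i\tss i}[0]\tss e^{-}_{i\tss j}[0]$, and controlling the $q$-commutation with the diagonal zero mode $l^{-}_{i\tss i}[0]$ are exactly the steps the paper carries out. The genuine gap is in how you pass from the $l$-entries to the Gaussian generators $e^{\pm}_{k\tss i}(u)$ for general $k$. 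The paper first reduces to $k=1$ by invoking the consistency of the Gauss decomposition with the natural subalgebra embeddings (Proposition~4.2 of \cite{jlm:ib-c} and \cite{jlm:ib-bd}); after that reduction $h^{\pm}_1(u)=l^{\pm}_{1\tss1}(u)$ and $e^{\pm}_{1\tss i}(u)=l^{\pm}_{1\tss1}(u)^{-1}\tss l^{\pm}_{1\tss i}(u)$, so the conversion is a one-line cancellation using the commutativity of $l^{\pm}_{1\tss1}(u)$ with $l^{-}_{i\tss j}[0]$. You never make this reduction, and your substitute for it does not work as stated.

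Concretely, your proposed induction on $i-k$ with base case $i=k+1$ rests on the claim that there ``the relevant quasideterminants reduce to simple ratios of individual entries''; this is only true when $k=1$. For $k\geqslant 2$ the generator $e^{\pm}_{k,k+1}(u)$ given by \eqref{eijmlqua} is a genuine $k\times k$ quasideterminant involving all entries $l^{\pm}_{a\tss b}(u)$ with $a\leqslant k$, so the base case is no easier than the general statement, and the induction has nothing to start from. Moreover, the part you defer to ``bookkeeping'' --- commuting $e^{-}_{i\tss j}[0]$ past every entry and every factor $h^{\pm}_m(u)^{-1}$ inside the quasideterminant and showing all spurious terms cancel --- is precisely the hard content of the lemma for $k>1$; asserting that the restriction $k<i<j<k'$, $i\ne j'$ forces the cancellation is not an argument, and it is exactly what the embedding property of the Gauss decomposition is invoked to avoid. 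To repair the proposal, either cite and use that consistency property to assume $k=1$ (as the paper does), or actually establish the commutation relations of $e^{-}_{i\tss j}[0]$ with all the $l^{\pm}_{a\tss b}(u)$, $a,b\leqslant k$, entering \eqref{hmqua} and \eqref{eijmlqua} and carry out the cancellation in detail.
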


\begin{proof}Due to the consistency property of Gauss decompositions for subalgebras as stated
in Proposition~4.2 in \cite{jlm:ib-c} and \cite{jlm:ib-bd}, we may assume without loss
of generality, that $k=1$.
By taking matrix entries in \eqref{rllss} and
\eqref{rllmp}, write the defining relations in terms of the series $l^{\pm}_{ij}(u)$
to get
\begin{gather*}
\big(u\tss q^{\delta_{ij}} - v\tss q^{-\delta_{ij}}\big)\ts
l^{\pm}_{ia}(u)\ts l^{\pm}_{jb}(v)
+ \big(q - q^{-1}\big) \ts (u\ts\delta_{i<j} +v\ts \delta_{i>j})\ts
l^{\pm}_{ja}(u)\ts l^{\pm}_{ib}(v)\\
\qquad\quad{} -\de_{i\tss j'}\ts\frac{u-v}{u-v\tss\xi}\ts \sum_{k=1}^N \ts
d_{jk}(u,v)\tss l^{\pm}_{k'a}(u)\ts l^{\pm}_{kb}(v)\\
\qquad{}=\big(u\tss q^{\delta_{ab}} - v\tss q^{-\delta_{ab}}\big)
\ts l^{\pm}_{jb}(v)\ts l^{\pm}_{ia}(u)
{}+ \big(q - q^{-1}\big) \ts (u\ts \delta_{a>b}
+ v\ts \delta_{a<b})\ts l^{\pm}_{ja}(v)\ts l^{\pm}_{ib}(u)\\
\qquad\quad{} -\de_{a\tss b'}\ts\frac{u-v}{u-v\tss\xi}\ts \sum_{c=1}^N \ts
d_{cb}(u,v)\tss l^{\pm}_{jc}(v)\ts l^{\pm}_{ic'}(u),
\end{gather*}
and
\begin{gather*}
\big(u\tss q^{\delta_{ij}} - v\tss q^{-\delta_{ij}}\big)\ts
l^{+}_{ia}(u)\ts l^{-}_{jb}(v)
 + \big(q - q^{-1}\big) \ts (u\ts\delta_{i<j} +v\ts \delta_{i>j})\ts
l^{+}_{ja}(u)\ts l^{-}_{ib}(v)\\
\qquad\quad{} -\de_{i\tss j'}\ts\frac{u-v}{u-v\tss\xi}\ts \sum_{k=1}^N \ts
d_{jk}(u,v)\tss l^{+}_{k'a}(u)\ts l^{-}_{kb}(v)\\
\qquad{}=\big(u\tss q^{\delta_{ab}} - v\tss q^{-\delta_{ab}}\big)
\ts l^{-}_{jb}(v)\ts l^{+}_{ia}(u)
 + \big(q - q^{-1}\big) \ts (u\ts \delta_{a>b}
+ v\ts \delta_{a<b})\ts l^{-}_{ja}(v)\ts l^{+}_{ib}(u)\\
\qquad\quad{} -\de_{a\tss b'}\ts\frac{u-v}{u-v\tss\xi}\ts \sum_{c=1}^N \ts
d_{cb}(u,v)\tss l^{-}_{jc}(v)\ts l^{+}_{ic'}(u),
\end{gather*}
where $d_{ij}(u,v)$ are defined in~\eqref{dij}. If $1<i<j$ and $j\ne i'$, then
they give
\begin{gather*}
(u-v)\tss l^{\pm}_{1\tss i}(u)\tss l^{-}_{i\tss j}(v)
+\big(q - q^{-1}\big)\ts u\tss
l^{\pm}_{i\tss i}(u)\ts l^{-}_{1\tss j}(v)\\
\qquad{}=(u-v)\tss l^{-}_{i\tss j}(v)\ts l^{\pm}_{1\tss i}(u)
+\big(q - q^{-1}\big)\ts v\tss
l^{-}_{i\tss i}(v)\ts l^{\pm}_{1\tss j}(u).
\end{gather*}
By comparing the coefficients of $v$ on both sides, we come to the relations
\begin{gather}\label{loni}
l^{\pm}_{1\tss i}(u)\tss l^{-}_{i\tss j}[0]=l^{-}_{i\tss j}[0]\ts l^{\pm}_{1\tss i}(u)
-\big(q - q^{-1}\big)\tss l^{-}_{i\tss i}[0]\ts l^{\pm}_{1\tss j}(u).
\end{gather}
Similarly, assuming that $1<i\leqslant m$ and $m\ne 1'$, we get
\begin{gather*}
(u-v)\tss l^{\pm}_{1\tss 1}(u)\tss l^{-}_{i\tss m}(v)
+\big(q - q^{-1}\big)\ts u\tss
l^{\pm}_{i\tss 1}(u)\ts l^{-}_{1\tss m}(v)\\
\qquad{}=(u-v)\tss l^{-}_{i\tss m}(v)\ts l^{\pm}_{1\tss 1}(u)
+\big(q - q^{-1}\big)\ts v\tss
l^{-}_{i\tss 1}(v)\ts l^{\pm}_{1\tss m}(u),
\end{gather*}
which yields
\begin{gather}\label{lonene}
l^{\pm}_{1\tss 1}(u)\tss l^{-}_{i\tss m}[0]=l^{-}_{i\tss m}[0]\ts l^{\pm}_{1\tss 1}(u).
\end{gather}
Since $l^{\pm}_{1\tss i}(u)=l^{\pm}_{1\tss 1}(u)\tss e^{\pm}_{1\tss i}(u)$,
together with relation \eqref{loni} this implies
\begin{gather}\label{eul}
e^{\pm}_{1\tss i}(u)\tss l^{-}_{i\tss j}[0]=l^{-}_{i\tss j}[0]\ts e^{\pm}_{1\tss i}(u)
-\big(q - q^{-1}\big)\tss l^{-}_{i\tss i}[0]\ts e^{\pm}_{1\tss j}(u).
\end{gather}
Applying the defining relations again, for any $1<i<1'$ we get
\begin{gather*}
(u-v)\tss l^{\pm}_{1\tss i}(u)\tss l^{-}_{i\tss i}(v)
+\big(q - q^{-1}\big)\ts u\tss
l^{\pm}_{i\tss i}(u)\ts l^{-}_{1\tss i}(v)
=\big(u\tss q-v\tss q^{-1}\big)\tss l^{-}_{i\tss i}(v)\ts l^{\pm}_{1\tss i}(u).
\end{gather*}
Comparing the coefficients of $v$ on both sides, we come to
\begin{gather*}
l^{\pm}_{1\tss i}(u)\tss l^{-}_{i\tss i}[0]=q^{-1}\tss l^{-}_{i\tss i}[0]\ts l^{\pm}_{1\tss i}(u)
\end{gather*}
which implies
\begin{gather}\label{epmi}
e^{\pm}_{1\tss i}(u)\tss l^{-}_{i\tss i}[0]=q^{-1}\tss l^{-}_{i\tss i}[0]\ts e^{\pm}_{1\tss i}(u).
\end{gather}
Hence, using this together with the decomposition $l^{-}_{i\tss j}[0]=l^{-}_{i\tss i}[0]e^-_{ij}[0]$,
we derive from \eqref{eul} that
\begin{gather*}
q^{-1}\tss e^{\pm}_{1\tss i}(u)\tss e^{-}_{i\tss j}[0]=e^{-}_{i\tss j}[0]\ts e^{\pm}_{1\tss i}(u)
-\big(q - q^{-1}\big)\ts e^{\pm}_{1\tss j}(u).
\end{gather*}
We can write this as the $q$-commutator relation
\begin{gather*}
\big[e^{\pm}_{1\tss i}(u),e^{-}_{i\tss j}[0]\big]_q =\big(1 - q^2\big)\ts e^{\pm}_{1\tss j}(u),
\end{gather*}
as required.
\end{proof}

\begin{Lemma}\label{lem:nthro} For $n\geqslant 2$ we have
\begin{gather*}
\big[e^{\pm}_{1\tss 2'}(u),e^{-}_{2' 1'}[0]\big]
=\big(1 - q^2\big)\ts \big(e^{\pm}_{1\tss 1'}(u)+e^{\pm}_{1\tss 2}(u)e^{\pm}_{1\tss 2'}(u)\big).
\end{gather*}
\end{Lemma}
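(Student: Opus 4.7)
My plan is to follow the strategy of Lemma~\ref{lem:kpt}: extract two auxiliary commutation relations directly from the $RLL$ defining relations by comparing leading-$v$ coefficients, and then combine them through the Gauss decomposition to isolate the commutator of $e^{\pm}_{1, 2'}(u)$ with $e^{-}_{2', 1'}[0]$. What is new in this case is that for the relevant index choices the $Q$-summand of $R(u,v)$ no longer vanishes, and its contribution is precisely what produces the inhomogeneous right-hand side, namely the terms $e^{\pm}_{1, 1'}(u)$ and $e^{\pm}_{1, 2}(u)\tss e^{\pm}_{1, 2'}(u)$.

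The first auxiliary relation is obtained by applying the master $RLL$ relation (both same-sign and mixed) with indices $(i, a, j, b) = (1, 2', 2', 1')$. Here neither $\delta_{i, j'}$ nor $\delta_{a, b'}$ is nonzero (since $n \geq 2$), so the $Q$-terms drop out, and the leading-$v$ coefficient yields
\begin{gather*}
l^{\pm}_{1, 2'}(u)\tss l^{-}_{2', 1'}[0] = l^{-}_{2', 1'}[0]\tss l^{\pm}_{1, 2'}(u) - \big(q - q^{-1}\big)\tss l^{-}_{2', 2'}[0]\tss l^{\pm}_{1, 1'}(u),
\end{gather*}
parallel in shape to \eqref{loni}. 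The second relation comes from the choice $(i, a, j, b) = (1, 1, 2', 1')$; now $\delta_{a, b'} = \delta_{1, 1} = 1$ and the $Q$-summand fires on the right. Evaluating $d_{c, 1'}(u, v)$ from \eqref{dij}, the leading-$v$ contribution of the $Q$-sum survives only for $c = 2'$ and $c = 1'$, since $l^{-}_{2', c}[0] = 0$ unless $c \geq 2'$. After multiplying by $\xi^{-1}$ (the leading value of $(u-v)/(u-v\xi)$) and combining with the non-$Q$ terms one arrives at
\begin{gather*}
h^{\pm}_1(u)\tss l^{-}_{2', 1'}[0] = q\tss l^{-}_{2', 1'}[0]\tss h^{\pm}_1(u) + \big(q^2 - 1\big)\tss h^{\pm}_1(u)\tss l^{-}_{2', 2'}[0]\tss e^{\pm}_{1, 2}(u),
\end{gather*}
after invoking \eqref{lonene} to pass $h^{\pm}_1(u)$ through $l^{-}_{2', 2'}[0]$, which is permissible because $2' \ne 1'$.

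Assembling these is then routine. Substitute $l^{\pm}_{1, 2'}(u) = h^{\pm}_1(u)\tss e^{\pm}_{1, 2'}(u)$ and $l^{\pm}_{1, 1'}(u) = h^{\pm}_1(u)\tss e^{\pm}_{1, 1'}(u)$ into the first auxiliary relation; use the second to swap $l^{-}_{2', 1'}[0]$ past $h^{\pm}_1(u)$; write $l^{-}_{2', 1'}[0] = l^{-}_{2', 2'}[0]\tss e^{-}_{2', 1'}[0]$; and apply \eqref{epmi} with $i = 2'$ to commute $e^{\pm}_{1, 2'}(u)$ through $l^{-}_{2', 2'}[0]$. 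Every $h^{\pm}_1(u)$ and $l^{-}_{2', 2'}[0]$ factor then cancels, and a final multiplication by $q$ delivers the claimed identity.

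The main obstacle is the $Q$-summand bookkeeping in the second auxiliary relation: $d_{2', 1'}(u,v)$ and $d_{1', 1'}(u,v)$ have different functional forms in \eqref{dij}, and one must verify that the $(q^2 - 1)$ factor from the $c = 2'$ term and the $(q - 1)$ factor from the $c = 1'$ term combine with the naive non-$Q$ contribution to give the clean coefficient $q$ on $l^{-}_{2', 1'}[0]\tss h^{\pm}_1(u)$ and an inhomogeneous piece proportional to $e^{\pm}_{1, 2}(u)$. This secondary $q$-like commutation of $h^{\pm}_1(u)$ with $l^{-}_{2', 1'}[0]$—which is absent in Lemma~\ref{lem:kpt} because the condition $j < k'$ there excluded the relevant index configuration—is ultimately what forces the extra term $e^{\pm}_{1, 2}(u)\tss e^{\pm}_{1, 2'}(u)$ on the right-hand side of the lemma.
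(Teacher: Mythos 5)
Your proposal is correct and takes essentially the same route as the paper: the paper combines the relation from the $(1,2',2',1')$ entries of the $RLL$ relations (its \eqref{loni} specialized to $i=2'$, $j=1'$) with the $q$-commutation of $l^{-}_{2'1'}[0]$ past $l^{\pm}_{11}(u)=h^{\pm}_1(u)$ obtained from the $(1,1,2',1')$ entries, where the $Q$-summand contributes (its relation \eqref{ploni}, which is exactly your second auxiliary relation after rewriting with \eqref{lonene} and $l^{\pm}_{12}(u)=h^{\pm}_1(u)\tss e^{\pm}_{12}(u)$). The final assembly via the Gauss decomposition, $l^{-}_{2'1'}[0]=l^{-}_{2'2'}[0]\tss e^{-}_{2'1'}[0]$ and \eqref{epmi} with $i=2'$ is identical, including your bookkeeping of the $c=2'$ and $c=1'$ terms of the $Q$-sum.
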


\begin{proof}The defining relations give
\begin{gather*}
(u- v)\ts
l^{\pm}_{11}(u)\ts l^{-}_{2' 1'}(v)
 + \big(q - q^{-1}\big)\ts u\ts
l^{\pm}_{2'1}(u)\ts l^{-}_{11'}(v)\\
{}=(u- v)
\ts l^{-}_{2' 1'}(v)\ts l^{\pm}_{11}(u)
+ \big(q - q^{-1}\big)\ts v\ts l^{-}_{2'1}(v)\ts l^{\pm}_{11'}(u)
-\frac{u-v}{u-v\tss\xi}\ts \sum_{c=1}^N \ts
d_{c\tss 1'}(u,v)\tss l^{-}_{2'c}(v)\ts l^{\pm}_{1\tss c'}(u).
\end{gather*}
By comparing the coefficients of $v$ on both sides we come to
the relation
\begin{gather}\label{ploni}
l^{-}_{2' 1'}[0]\tss l^{\pm}_{1\tss 1}(u)=q^{-1}\tss l^{\pm}_{1\tss 1}(u)\tss l^{-}_{2' 1'}[0]
-\big(q - q^{-1}\big)\tss
l^{-}_{2' 2'}[0]\ts l^{\pm}_{1\tss 2}(u).
\end{gather}
On the other hand, taking $i=2'$ and $j=1'$ in \eqref{loni} we get
\begin{gather*}
l^{\pm}_{1\tss 2'}(u)\tss l^{-}_{2' 1'}[0]=l^{-}_{2' 1'}[0]\ts l^{\pm}_{1\tss 2'}(u)
-\big(q - q^{-1}\big)\tss l^{-}_{2' 2'}[0]\ts l^{\pm}_{1\tss 1'}(u).
\end{gather*}
Write $l^{\pm}_{1\tss i}(u)=l^{\pm}_{1\tss 1}(u)\tss e^{\pm}_{1\tss i}(u)$ for $i=1',2'$
and use~\eqref{ploni} together with~\eqref{lonene} to bring this to the form
\begin{gather*}
e^{\pm}_{1\tss 2'}(u)\tss l^{-}_{2' 1'}[0]=q^{-1}\tss l^{-}_{2' 1'}[0]\ts e^{\pm}_{1\tss 2'}(u)
-\big(q - q^{-1}\big)\tss
l^{-}_{2' 2'}[0]\ts \big(e^{\pm}_{1\tss 1'}(u)+e^{\pm}_{1\tss 2}(u)e^{\pm}_{1\tss 2'}(u)\big).
\end{gather*}
Finally, write $l^{-}_{2' 1'}[0]=l^{-}_{2' 2'}[0]\tss e^{-}_{2' 1'}[0]$
and apply \eqref{epmi} with $i=2'$.
\end{proof}

\begin{Lemma}\label{lem:gainv}
For any $i<j<i'$
we have the relations
\begin{gather*}
e^{\pm}_{ij}\big(u\xi q^{2i}\big)=q^{\bj-\bi+1}\tss\sum_{s=0}^{j-i-1}\tss(-1)^{s+1}
\sum_{j'=a_0<a_1<\dots<a_{s+1}=i'}e^{\pm}_{a_0a_1}(u)\tss e^{\pm}_{a_1a_2}(u)\cdots
e^{\pm}_{a_sa_{s+1}}(u).
\end{gather*}
\end{Lemma}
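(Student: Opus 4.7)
Plan. I first observe that the signed nested sum on the right-hand side is the standard expansion of the $(j',i')$-entry of the inverse of the upper unitriangular matrix $E^{\pm}(u)$. Indeed, writing $E^{\pm}(u) = I + X$ with $X$ strictly upper triangular and iterating $E^{\pm}(u)^{-1} = I - X + X^2 - \dots$ (which terminates because $X$ is nilpotent) one obtains
\begin{gather*}
\bigl(E^{\pm}(u)^{-1}\bigr)_{j'\tss i'} = \sum_{s=0}^{j-i-1}(-1)^{s+1}\sum_{j'=a_0<\dots<a_{s+1}=i'} e^{\pm}_{a_0 a_1}(u)\cdots e^{\pm}_{a_s a_{s+1}}(u).
\end{gather*}
Hence the lemma is equivalent to the matrix identity $e^{\pm}_{ij}(u\xi q^{2i}) = q^{\bj-\bi+1}\bigl(E^{\pm}(u)^{-1}\bigr)_{j'\tss i'}$.

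Step 1. From the unitarity relation \eqref{unize} I extract $L^{\pm}(u\xi) = z^{\pm}(u)\,D\bigl(L^{\pm}(u)^{-1}\bigr)^{\tra} D^{-1}$. Substituting $L^{\pm}(u)^{-1} = E^{\pm}(u)^{-1}H^{\pm}(u)^{-1}F^{\pm}(u)^{-1}$ and transposing exhibits $L^{\pm}(u\xi)$ as an opposite (UDL-type) triangular product whose factors are built from $D$-conjugated transposes of the standard Gauss factors of $L^{\pm}(u)$.

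Step 2. I compare this opposite decomposition with the standard Gauss decomposition $L^{\pm}(u\xi) = F^{\pm}(u\xi)H^{\pm}(u\xi)E^{\pm}(u\xi)$. Using the quasideterminant formulas \eqref{hmqua}--\eqref{eijmlqua} together with the consistency of Gauss decompositions on principal submatrices (Proposition~4.2 of \cite{jlm:ib-c,jlm:ib-bd}, also invoked in the proof of Lemma~\ref{lem:kpt}), I restrict attention to the top-left $i\times i$ principal block of $L^{\pm}(u)$ and its complementary lower-right block in $L^{\pm}(u\xi)$. Taking the $(i,j)$-entry and accounting for the weights $q^{\bar k}$ introduced by the $D$-conjugation yields an identity of the shape
\begin{gather*}
h^{\pm}_i(u\xi)\,e^{\pm}_{ij}(u\xi) = q^{\bi-\bj}\,z^{\pm}(u)\bigl(E^{\pm}(u)^{-1}\bigr)_{j'\tss i'}\,h^{\pm}_{i'}(u)^{-1}.
\end{gather*}

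Step 3. Finally, I substitute $u\xi\mapsto u\xi q^{2i}$ in the argument and collapse the resulting $h^{\pm}$-ratio $h^{\pm}_i(u\xi q^{2i})^{-1}z^{\pm}(u)h^{\pm}_{i'}(u)^{-1}$ via iterated use of Lemma~\ref{lem:gaurell} into a pure scalar. Combined with the factor $q^{\bi-\bj}$ coming from Step~2, this telescoping delivers the claimed prefactor $q^{\bj-\bi+1}$.

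Main obstacle. The $i$-dependence of the shift $q^{2i}$ is the delicate point: it must be produced through a precise iterated application of Lemma~\ref{lem:gaurell}, coordinated with the $q^{\bar k}$-weights generated by $D$-conjugation of the transposed decomposition, so that all non-scalar contributions cancel. A secondary technical difficulty is verifying that the off-principal corrections in the UDL--LDU comparison really do vanish; this is exactly what the principal-minor consistency property invoked in the proof of Lemma~\ref{lem:kpt} guarantees.
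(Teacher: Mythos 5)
Your starting point is the paper's: you correctly identify the nested signed sum as the $(j',i')$ entry of $E^{\pm}(u)^{-1}$ and you correctly see that the identity should come from the unitarity relation \eqref{unize} combined with the two Gauss decompositions. For $i=1$ this is essentially the published argument. But there are two genuine gaps in how you treat general $i$ and the argument shift. First, the ``identity of the shape'' in your Step~2 is false for $i\geqslant 2$: the $(j',i')$ entry of $D\tss L^{\pm}(u\xi)^{\tra}D^{-1}=L^{\pm}(u)^{-1}z^{\pm}(u)$ is not $h^{\pm}_i(u\xi)\tss e^{\pm}_{ij}(u\xi)$ times a power of $q$ on one side and a single term $\big(E^{\pm}(u)^{-1}\big)_{j'i'}h^{\pm}_{i'}(u)^{-1}z^{\pm}(u)$ on the other; both entries are genuine sums, $l^{\pm}_{ij}(u\xi)=\sum_{k\leqslant i}f^{\pm}_{ik}(u\xi)h^{\pm}_k(u\xi)e^{\pm}_{kj}(u\xi)$ and likewise on the inverse side, and the principal-minor consistency (Proposition~4.2 of \cite{jlm:ib-c,jlm:ib-bd}) does not make these cross terms vanish. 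Its actual role is different: it lets one run the whole computation inside the lower-rank subalgebra generated by the $l^{\pm}_{ab}(u)$ with $i\leqslant a,b\leqslant i'$, where the index $i$ becomes the first index, and where the parameter $\xi$ is replaced by $\xi q^{2(i-1)}$; this rank reduction, not Lemma~\ref{lem:gaurell}, is what produces the $i$-dependence of the shift.

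Second, your Step~3 cannot work as described. You may not ``substitute $u\xi\mapsto u\xi q^{2i}$'' on the left-hand side only: any substitution also shifts the argument of the series $e^{\pm}_{a_sa_{s+1}}(u)$ on the right, destroying the statement of Lemma~\ref{lem:gainv}. Moreover, Lemma~\ref{lem:gaurell} only relates products of $h$-series (with mismatched shifts $q^{2i}$ for different $i$, so it does not even telescope), it cannot change the argument of an $e$-series, and the combination $h^{\pm}_i(u\xi q^{2i})^{-1}z^{\pm}(u)\tss h^{\pm}_{i'}(u)^{-1}$ is a noncommutative series, not ``a pure scalar''. The missing ingredient is a commutation relation derived directly from the defining relations: setting $v=u\tss q^2$ in the $RLL$ relation for $l^{\pm}_{11}$, $l^{\pm}_{1j}$ gives
\begin{gather*}
h^{\pm}_1(u\tss\xi)\tss e^{\pm}_{1j}(u\tss\xi)=q^{-1}\tss e^{\pm}_{1j}\big(u\tss\xi\tss q^2\big)\tss h^{\pm}_1(u\tss\xi),
\end{gather*}
which, together with the first relation of Lemma~\ref{lem:gaurell} in the form $h^{\pm}_{1'}(u)^{-1}z^{\pm}(u)=h^{\pm}_1(u\tss\xi)$, lets one cancel the $h$-factors and simultaneously accounts for exactly one factor $q^2$ in the argument and the extra $q^{+1}$ in the prefactor $q^{\bj-\bi+1}$. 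The remaining $q^{2(i-1)}$ then comes from the change of $\xi$ under the rank reduction described above. Without these two steps your outline does not close.
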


\begin{proof}We will rely on Proposition~4.2 in \cite{jlm:ib-c} and \cite{jlm:ib-bd},
which allows us to reduce the proof to the case $i=1$.
Working in the algebra $\U^{\ext}_q(\whg)$,
write~\eqref{unize} in the form
\begin{gather*}
D{L}^{\pm}(u\ts\xi)^{\tra}D^{-1}={L}^{\pm}(u)^{-1}z^{\pm}(u)
\end{gather*}
and take the $(j',1')$ entries on both sides. Using the Gauss decompositions \eqref{gaussdec}
and
\begin{gather*}
L^{\pm}(u)^{-1}=E^{\pm}(u)^{-1}H^{\pm}(u)^{-1}F^{\pm}(u)^{-1},
\end{gather*}
we obtain
\begin{gather*}
q^{\bar 1-\bj}\ts h^{\pm}_1(u\tss\xi)\tss e^{\pm}_{1j}(u\tss\xi)
=\wh e^{\ts\pm}_{j'1'}(u)\tss h^{\pm}_{1'}(u)^{-1}\tss z^{\pm}(u),
\end{gather*}
where we used the notation $\wh e^{\ts\pm}_{ij}(u)$ for the $(i,j)$ entry of the matrix
$E^{\pm}(u)^{-1}$. Now apply \eqref{hone} to replace $h^{\pm}_{1'}(u)^{-1}\tss z^{\pm}(u)$
with $h^{\pm}_1(u\tss\xi)$ and note that
\begin{gather}\label{hexi}
h^{\pm}_1(u\tss\xi)\tss e^{\pm}_{1j}(u\tss\xi)=q^{-1}\tss e^{\pm}_{1j}\big(u\tss\xi\tss q^2\big)\tss
h^{\pm}_1(u\tss\xi).
\end{gather}
Indeed, by the defining relations,
\begin{gather*}
\big(u\tss q-v\tss q^{-1}\big)\tss l^{\pm}_{1\tss 1}(u)\tss l^{\pm}_{1\tss j}(v)
=(u-v)\tss l^{\pm}_{1\tss j}(v)\ts l^{\pm}_{1\tss 1}(u)
+\big(q - q^{-1}\big)\ts v\tss l^{\pm}_{1\tss 1}(v)\ts l^{\pm}_{1\tss j}(u).
\end{gather*}
By setting $v=u\tss q^2$ we get
\begin{gather*}
l^{\pm}_{1\tss j}(u\tss q^2)\ts l^{\pm}_{1\tss 1}(u)= q\ts l^{\pm}_{1\tss 1}\big(u\tss q^2\big)\ts l^{\pm}_{1\tss j}(u).
\end{gather*}
Since $l^{\pm}_{1\tss j}(u)=h^{\pm}_1(u)\tss e^{\pm}_{1\tss j}(u)$,
\eqref{hexi} follows.
It remains to apply the formula
\begin{gather*}
\wh e^{\ts\pm}_{j'1'}(u)=
\sum_{s=0}^{j-2}\tss(-1)^{s+1}
\sum_{j'=a_0<a_1<\dots<a_{s+1}=1'}e^{\pm}_{a_0a_1}(u)\tss e^{\pm}_{a_1a_2}(u)\cdots
e^{\pm}_{a_sa_{s+1}}(u)
\end{gather*}
for the $(j',1')$ entries
of the inverse matrix $E^{\pm}(u)^{-1}$.
\end{proof}

\begin{Proposition}\label{prop:consi}
The images of the coefficients of all series $e^{\pm}_{ij}(u)$
{\rm(}respectively, $f^{\pm}_{ji}(u)${\rm)} with $1\leqslant i<j\leqslant N$
under the isomorphism $\U^R_q(\whg)\to \U_q(\whg)$ belong
to the subalgebra $\U_q(\whg)^+$ {\rm(}respectively, $\U_q(\whg)^-${\rm)}.
The images of the coefficients of all series $h^{\pm}_i(u)$
with $i=1,\dots,N$ belong
to the subalgebra $\U_q(\whg)^0$.
\end{Proposition}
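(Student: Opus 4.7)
The plan is to propagate the simple-root property through the structural identities of Lemmas~\ref{lem:kpt},~\ref{lem:nthro},~\ref{lem:gainv}, and~\ref{lem:gaurell}, working inside $\U^{R}_q(\whg)$ and invoking the isomorphism from Section~\ref{subsec:rm} to match Gaussian generators with Drinfeld data.

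\textit{Simple-root case.} The isomorphism formulas~\eqref{isoma} (and their rank-$n$ analogues) send each $x^{\pm}_{i,m}$ to a scalar multiple of a single coefficient of $X^{\pm}_i(uq^{i})$. Since $e^{+}_{i,i+1}(u)$ involves only non-negative powers of $u$ while $e^{-}_{i,i+1}(u)$ involves only non-positive powers, and $e^{+}_{i,i+1}[0]=0$ by~\eqref{lpmo}, each individual coefficient $e^{\pm}_{i,i+1}[k]$ is recoverable from a single $x^{+}_{i,m}\in\U_q(\whg)^{+}$. Similarly every coefficient of $f^{\pm}_{i+1,i}(u)$ and of the rank-$n$ analogues lies in the image of $\U_q(\whg)^{-}$, while the consecutive-ratio series $h^{\pm}_{i+1}(uq^i)\tss h^{\pm}_{i}(uq^i)^{-1}$ come from $\psi_i,\varphi_i\in\U_q(\whg)^{0}$.

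\textit{General $e^{\pm}$ and $f^{\pm}$.} I induct on the gap $j-i$. For $1\leqslant i<j<i'$ with $j-i\geqslant 2$, Lemma~\ref{lem:kpt} applied with an intermediate index $l$ satisfying $i<l<j$, $l\neq j'$ (take $l=i+1$, or $l=i+2$ in the edge case $i+1=j'$) gives
\[
e^{\pm}_{ij}(u)=(1-q^2)^{-1}\big[e^{\pm}_{il}(u),\tss e^{-}_{lj}[0]\big]_q,
\]
a $q$-commutator of Gaussian generators of strictly smaller gap, hence in the image of $\U_q(\whg)^{+}$ by induction. For the `right-half' pairs $(a,b)$ with $j'\leqslant a<b\leqslant i'$, which are not reached by Lemma~\ref{lem:kpt}, I run a secondary induction on the gap $b-a$ using Lemma~\ref{lem:gainv}: applying it to the first-half pair $(i,j)=(b',a')$ (so that $i'-j'=b-a$) makes the $s=0$ summand equal to a scalar multiple of $e^{\pm}_{a,b}(u)$, while all other summands involve products of right-half $e^{\pm}$'s of strictly smaller gap that are already in the image of $\U_q(\whg)^{+}$. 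Lemma~\ref{lem:nthro} supplies the first nontrivial closed-form case $e^{\pm}_{1,2'}$. The analogous inclusions for all $f^{\pm}_{ji}[k]$ follow by the parallel argument, or by transport along the anti-isomorphism of Remark~\ref{rem:trans}.

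\textit{Diagonal $h^{\pm}$ and the main obstacle.} The simple-root step places every ratio $h^{\pm}_{j+1}/h^{\pm}_j$ in the image of $\U_q(\whg)^{0}$, and hence also $h^{\pm}_j(u)\tss h^{\pm}_1(u)^{-1}$ (at suitable $u$-shifts) for $j=2,\dots,n+1$. In $\U^{R}_q(\whg)$, Lemma~\ref{lem:gaurell} (with $z^{\pm}(u)=1$) gives $h^{\pm}_1(u\xi)\tss h^{\pm}_{1'}(u)=1$, which together with the chain of ratios yields a functional equation
\[
h^{\pm}_1(u\xi)\tss h^{\pm}_1(u)=R^{\pm}(u),
\]
with $R^{\pm}(u)$ in the image of $\U_q(\whg)^{0}$. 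The $u^0$ coefficient extracts $h^{\pm}_1[0]^2\in\U_q(\whg)^{0}$; the required square roots sit directly in $\U_q(\whg)^{0}$ for $B_n$ and in its extension from Remark~\ref{rem:ext} for $C_n$ and $D_n$. The higher coefficients of $h^{\pm}_1(u)$ are then determined order by order from the functional equation, using that $1+\xi^{-k}\neq 0$ since $q$ is not a root of unity. The recurrence~\eqref{yhiipone} of Lemma~\ref{lem:gaurell} finally recovers all remaining $h^{\pm}_{i'}(u)$. The main obstacle is precisely this last step: the isomorphism only supplies ratio information about the $h^{\pm}_i$, and isolating the base series $h^{\pm}_1(u)$ requires both the centrality relation of Lemma~\ref{lem:gaurell} and the square-root extensions of Remark~\ref{rem:ext}, which explains why the latter are imposed in types $C_n$ and $D_n$.
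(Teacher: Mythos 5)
Your treatment of the diagonal series $h^{\pm}_i(u)$ (functional equation from $z^{\pm}(u)=1$ together with Lemma~\ref{lem:gaurell}, square roots via Remark~\ref{rem:ext}) and your reduction of the $f^{\pm}$-case to the $e^{\pm}$-case through Remark~\ref{rem:trans} agree in substance with the paper. The induction you set up for the $e^{\pm}_{ij}(u)$, however, has a genuine gap. First, the anti-diagonal pairs $(i,j)=(k,k')$ with $k\geqslant 2$ are never reached: Lemma~\ref{lem:kpt} only produces targets $e^{\pm}_{kj}(u)$ with $j<k'$ strictly; the pairs produced by Lemma~\ref{lem:gainv} are the reflections $(j',i')$ of pairs with $i<j<i'$, and these satisfy $j'+i'>N+1$, so they lie strictly below the anti-diagonal; and Lemma~\ref{lem:nthro} covers only $k=1$. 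For the same reason your claim that the non-leading summands in Lemma~\ref{lem:gainv} involve only right-half $e^{\pm}$'s of smaller gap is false: the chains $j'=a_0<\cdots<a_{s+1}=i'$ may contain anti-diagonal links, e.g.\ $e^{\pm}_{2\,2'}(u)$ occurs when you aim at $e^{\pm}_{2\,1'}(u)$, so your secondary induction needs exactly the elements it cannot produce. Second, in type $C_n$ the pair $(n-1,n+1)$ escapes the primary induction: the unique intermediate index is $l=n=(n+1)'$, which Lemma~\ref{lem:kpt} excludes, and your fallback $l=i+2$ equals $j$, hence is out of range; the paper has to prove the separate $q^2$-commutator identity \eqref{spezqij} precisely for this configuration. (Also, Lemma~\ref{lem:nthro} concerns $e^{\pm}_{1\,1'}(u)$, not $e^{\pm}_{1\,2'}(u)$ as you state, and it requires $n\geqslant 2$, so the low-rank cases $\oa_3$, $\spa_4$, $\oa_4$, where the paper uses ad hoc identities such as $e^{\pm}_{12}(u)^2=-\big(q^{1/2}+q^{-1/2}\big)e^{\pm}_{11'}(u)$ and $e^{\pm}_{22'}(u)=0$, are not covered either.)

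The structural reason these cases are missed is that you induct on the gap $j-i$ inside a fixed algebra, whereas the paper inducts on the rank $n$: by Proposition~4.2 of \cite{jlm:ib-c} and \cite{jlm:ib-bd}, the coefficients of all $e^{\pm}_{ij}(u)$ with $2\leqslant i<j\leqslant 2'$ lie in the embedded algebra of rank $n-1$, so the induction hypothesis disposes of every interior pair at once -- including all $(k,k')$ with $k\geqslant 2$ and the type-$C$ pair $(n-1,n+1)$ for $n\geqslant 3$ -- and Lemmas~\ref{lem:kpt}, \ref{lem:nthro} and \ref{lem:gainv} are needed only for the border pairs $(1,j)$, $(1,1')$ and $(j',1')$, with the exceptional identities confined to the base cases. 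To repair your argument you would either have to import this rank reduction or prove shifted analogues of Lemma~\ref{lem:nthro}, of \eqref{spezqij}, and of the vanishing of $e^{\pm}_{n\,n'}(u)$ in type $D$ for all middle indices, which amounts to the same work.
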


\begin{proof}
We will identify the coefficients of all the series with their images under the
isomorphism $\U^R_q(\whg)\to \U_q(\whg)$.
We start by verifying the claim for the series $h^{+}_i(u)$; the argument
for $h^{-}_i(u)$ is exactly the same.
We have
\begin{gather*}
h^{+}_i(u)=\vp_1\big(u\tss q^{-1}\big)\tss \vp_2\big(u\tss q^{-2}\big)\cdots \vp_{i-1}\big(u\tss q^{-i+1}\big)\tss h^{+}_1(u),
\qquad i=2,\dots,n.
\end{gather*}
This relation is also valid for $i=n+1$ in type $B_n$, whereas
\begin{gather*}
h^{+}_{n+1}(u)=\vp_{n}\big(u\tss q^{-n-1}\big)\tss h^{+}_{n}(u)\Fand
h^{+}_{n+1}(u)=\vp_{n}\big(u\tss q^{-n+1}\big)\tss h^{+}_{n-1}(u)
\end{gather*}
for types $C_n$ and $D_n$, respectively.
Now substitute these expressions
into the formula for the series $z^+(u)$ given in Section~\ref{subsev:zu},
so that the relation $z^+(u)=1$ would give an equation for the coefficients of the series
$h^{+}_1(u)$. For type $B_n$ it reads
\begin{gather*}
h^{+}_1(u)\tss h^{+}_1(u\xi)\prod_{i=1}^n\vp_i\big(uq^{-i}\big)\tss\vp_i\big(u\xi q^{i}\big)=1.
\end{gather*}
Hence, the constant term $h^{+}_{1,0}$ of $h^{+}_1(u)$ is found from
$h^{+\ts 2}_{1,0}=k_1^2\cdots k_n^2$
which together with Lemma~\ref{lem:gaurell}
implies that all coefficients of the series~$h^{+}_i(u)$
with $i=1,\dots,N$ belong to the subalgebra $\U_q(\whg)^0$. The same conclusion is reached for types~$C_n$ and~$D_n$
from the respective formulas
\begin{gather*}
h^{+}_1(u)\tss h^{+}_1(u\xi)\prod_{i=1}^{n-1}\vp_i\big(uq^{-i}\big)\tss\vp_i\big(u\xi q^{i}\big)
\cdot\vp_n\big(u\tss q^{-n-1}\big) =1
\end{gather*}
and
\begin{gather*}
h^{+}_1(u)\tss h^{+}_1(u\xi)\prod_{i=1}^{n-2}\vp_i\big(uq^{-i}\big)\tss\vp_i\big(u\xi q^{i}\big)
\cdot\vp_{n-1}\big(u\tss q^{-n+1}\big)\tss\vp_n\big(u\tss \xi\tss q^{n-1}\big) =1,
\end{gather*}
which imply the relations
$h^{+\ts 2}_{1,0}=k_1^2\cdots k_{n-1}^2k_n$
and
$h^{+\ts 2}_{1,0}=k_1^2\cdots k_{n-2}^2\tss k_{n-1}k_n$,
respectively, for the constant terms.

Now turn to the series $e^{\pm}_{ij}(u)$ and use induction on $n$.
By Proposition~4.2 in~\cite{jlm:ib-c} and~\cite{jlm:ib-bd}, the coefficients
of these series with $2\leqslant i<j\leqslant 2'$ can be regarded as
elements of the algebra~$\U_q(\wh{\g'})$ associated with the Lie algebra~$\g'$
of rank $n-1$, which belongs to the same type as~$\g$. These coefficients
coincide with the generators obtained from the Gauss decompositions
of the associated $L$-operators, so that we can apply the induction hypothesis
to suppose that the coefficients of all series $e^{\pm}_{ij}(u)$
with $2\leqslant i<j\leqslant 2'$ belong to the subalgebra~$\U_q(\whg)^+$.

On the other hand, the coefficients of the series $e^{\pm}_{12}(u)$
also belong to the subalgebra $\U_q(\whg)^+$, and so by applying Lemma~\ref{lem:kpt}
we derive that
the coefficients of the series $e^{\pm}_{1j}(u)$ with $j=2,\dots,2'$
belong to $\U_q(\whg)^+$, assuming that $n\geqslant 2$ for type $B_n$,
and $n\geqslant 3$ for types~$C_n$ and~$D_n$. Furthermore, by Lemma~\ref{lem:gainv}
we have $e^{-}_{2' 1'}[0]=-e^{-}_{12}[0]$ so that applying Lemma~\ref{lem:nthro}
we may conclude that the required property is shared by the series $e^{\pm}_{11'}(u)$.
The induction step is completed
by another application of Lemma~\ref{lem:gainv}, which implies that the coefficients
of the series $e^{\pm}_{j'1'}(u)$ with $j=2,\dots,2'$ also belong to $\U_q(\whg)^+$.

It remains to verify the induction base for the Lie algebras $\g=\oa_3,\spa_4$ and $\oa_4$. In the case $\g=\oa_3$ it follows from the identity
\begin{gather*}
e^{\pm}_{12}(u)^2=-\big(q^{1/2}+q^{-1/2}\big)\tss e^{\pm}_{11'}(u),
\end{gather*}
which is a particular case of \cite[Lemma~4.9]{jlm:ib-bd} obtained by taking
the residue at $u=q^{-2}v$ in the second formula.
In the case $\g=\spa_4$ it is sufficient to verify the identity
\begin{gather}\label{spezqij}
\big[e^{\pm}_{1\tss 2}(u),e^{-}_{2\tss 2'}[0]\big]_{q^2} =\big(1 - q^4\big)\ts e^{\pm}_{1\tss 2'}(u).
\end{gather}
By the defining relations,
\begin{gather*}
(u- v)\ts
l^{\pm}_{12}(u)\ts l^{-}_{2 2'}(v)
 + \big(q - q^{-1}\big)\ts u\ts
l^{\pm}_{22}(u)\ts l^{-}_{12'}(v)\\
{}=(u- v)
\ts l^{-}_{2 2'}(v)\ts l^{\pm}_{12}(u)
+ \big(q - q^{-1}\big)\ts v\ts l^{-}_{22}(v)\ts l^{\pm}_{12'}(u)
-\frac{u-v}{u-v\tss\xi}\ts \sum_{c=1}^4 \ts
d_{c\tss 2'}(u,v)\tss l^{-}_{2c}(v)\ts l^{\pm}_{1\tss c'}(u).
\end{gather*}
Comparing the coefficients of $v$ on both sides we derive
\begin{gather*}
l^{\pm}_{12}(u)\ts l^{-}_{2 2'}[0]=q\ts l^{-}_{2 2'}[0]\ts l^{\pm}_{12}(u)
+\big(q^{-1}-q^3\big)\ts l^{-}_{22}[0]\ts l^{\pm}_{12'}(u).
\end{gather*}
Relation~\eqref{spezqij} now follows from~\eqref{lonene} and~\eqref{epmi}.

The particular case $\g=\oa_4$ was considered in \cite[Section~4.4]{jlm:ib-bd}; it was shown
therein that $e^{\pm}_{22'}(u)=0$, which is sufficient
to complete the argument in this case. These relations essentially refer to
the algebra $\U^R_q(\whg)$ with $\g=\oa_2$, whose definition extends
to the abelian Lie algebra $\oa_2$. In this case we have $\xi=1$ and the defining
relations give\footnote{This corrects the formula used in the proof of \cite[Lemma~4.13]{jlm:ib-bd}.}
\begin{gather*}
\big(u\tss q-v\tss q^{-1}\big)\tss l^{\pm}_{11'}(u)\tss l^{\pm}_{11}(v)=\big(u\tss q^{-1}-v\tss q\big)\tss
l^{\pm}_{11}(v)\tss l^{\pm}_{11'}(u).
\end{gather*}
By setting $v=u\tss q^2$ we derive that $l^{\pm}_{11'}(u)=0$ and hence $e^{\pm}_{11'}(u)=0$.

The argument for the series $f^{\pm}_{ji}(u)$ is quite similar to that for $e^{\pm}_{ij}(u)$.
As an alternative approach, one can work with the quantum affine algebra defined
with the $R$-matrix $R_{21}(u,v)$ used instead of $R(u,v)$,
and modify the above calculations for the series $e^{\pm}_{ij}(u)$ accordingly;
then apply the anti-isomorphism introduced in Remark~\ref{rem:trans}.
\end{proof}

\subsection{Highest weight representations}\label{subsec:hw}

\begin{Definition}\label{def:hwr}
A representation $V$ of the algebra $\U^{R}_q(\whg)$ (or the extended
quantum affine algebra $\U^{\ext}_q(\whg)$)
is called a {\it highest weight representation}
if there exists a nonzero vector
$\ze\in V$ such that $V$ is generated by $\ze$
and the following relations hold:
\begin{alignat*}{2}
&l^{\pm}_{ij}(u)\ts\ze=0 \qquad &&\text{for} \quad
1\leqslant i<j\leqslant N, \qquad \text{and}\\
& l^{\pm}_{ii}(u)\ts\ze=\la^{\pm}_i(u)\ts\ze \qquad &&\text{for} \quad
i=1,\dots,N,
\end{alignat*}
for some formal series $\la^{+}_i(u)\in\CC[[u]]$ and $\la^{-}_i(u)\in\CC\big[\big[u^{-1}\big]\big]$.
The vector $\ze$ is called the {\it highest vector} of~$V$.
\end{Definition}

Note that by \eqref{lpmo}, the product of the constant terms of
$\la^{+}_i(u)$ and $\la^{-}_i(u)$ must equal~$1$.

\begin{Proposition}\label{prop:equiv}
In terms of the Gaussian generators,
the conditions in Definition~{\rm \ref{def:hwr}} are equivalent to the following:
\begin{alignat*}{2}
& e^{\pm}_{ij}(u)\ts\ze=0 \qquad &&\text{for} \quad
1\leqslant i<j\leqslant N, \qquad \text{and}\\
& h^{\pm}_{i}(u)\ts\ze=\la^{\pm}_i(u)\ts\ze \qquad &&\text{for} \quad
i=1,\dots,N.
\end{alignat*}
\end{Proposition}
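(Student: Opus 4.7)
The plan is to read off the equivalence directly from the Gauss decomposition \eqref{gaussdec} and the quasideterminant formulas \eqref{hmqua}, \eqref{eijmlqua}. For the implication from the Gaussian conditions to those of Definition~\ref{def:hwr}, expanding $L^{\pm}(u)=F^{\pm}(u)H^{\pm}(u)E^{\pm}(u)$ entry-wise yields
\begin{gather*}
l^{\pm}_{ij}(u)=\sum_{k=1}^{\min(i,j)} f^{\pm}_{ik}(u)\tss h^{\pm}_k(u)\tss e^{\pm}_{kj}(u),
\end{gather*}
with the convention $f^{\pm}_{kk}(u)=e^{\pm}_{kk}(u)=1$. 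For $i<j$ every summand contains the factor $e^{\pm}_{kj}(u)\tss\ze=0$ (since $k\leqslant i<j$), so $l^{\pm}_{ij}(u)\tss\ze=0$; and for $i=j$ only the $k=i$ term survives, yielding $l^{\pm}_{ii}(u)\tss\ze=h^{\pm}_i(u)\tss\ze=\la^{\pm}_i(u)\tss\ze$.

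For the converse I would use \eqref{hmqua} and \eqref{eijmlqua} directly. Setting $M=[l^{\pm}_{ab}(u)]_{1\leqslant a,b\leqslant i-1}$, the standard Gelfand--Retakh expansion of a quasideterminant with the boxed entry in the bottom-right corner gives
\begin{gather*}
h^{\pm}_i(u)=l^{\pm}_{ii}(u)-\sum_{p,q=1}^{i-1} l^{\pm}_{iq}(u)\tss [M^{-1}]_{qp}\tss l^{\pm}_{pi}(u),\\
h^{\pm}_i(u)\tss e^{\pm}_{ij}(u)=l^{\pm}_{ij}(u)-\sum_{p,q=1}^{i-1} l^{\pm}_{iq}(u)\tss [M^{-1}]_{qp}\tss l^{\pm}_{pj}(u).
\end{gather*}
Under the hypotheses of Definition~\ref{def:hwr} the rightmost factor in each subtracted summand is $l^{\pm}_{pi}(u)$ or $l^{\pm}_{pj}(u)$ with $p<i\leqslant j$, and hence annihilates $\ze$. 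This immediately gives $h^{\pm}_i(u)\tss\ze=\la^{\pm}_i(u)\tss\ze$ and $h^{\pm}_i(u)\tss e^{\pm}_{ij}(u)\tss\ze=0$. The leading coefficient of $h^{\pm}_i(u)$ is $l^{\pm}_{ii}[0]$, which is invertible by~\eqref{lpmo}; therefore $h^{\pm}_i(u)^{-1}$ is a well-defined formal series in $u^{\pm 1}$, and multiplying on the left produces $e^{\pm}_{ij}(u)\tss\ze=0$.

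The one genuinely delicate step is the bookkeeping in the second paragraph: one must verify that the rightmost operator appearing in every summand of the quasideterminant expansion lies in the upper-triangular annihilator family of $\ze$, so that the identity $l^{\pm}_{ab}(u)\tss\ze=0$ for $a<b$ can be applied. Everything else is routine algebra. Generation of $V$ by $\ze$ is preserved automatically, since the Gaussian generators and the series $l^{\pm}_{ij}(u)$ generate the same subalgebra of $\U^{\ext}_q(\whg)$.
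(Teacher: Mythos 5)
Your argument is correct and is exactly the route the paper takes: the paper's proof simply states that the equivalence is immediate from the Gauss decomposition \eqref{gaussdec} and the quasideterminant formulas \eqref{hmqua}--\eqref{eijmlqua}, and your two paragraphs merely spell out that one-line argument (the entry-wise expansion of $F^{\pm}H^{\pm}E^{\pm}$ for one direction, the Gelfand--Retakh expansions with rightmost factors $l^{\pm}_{pi}(u)$, $l^{\pm}_{pj}(u)$, $p<i$, plus invertibility of the constant term $l^{\pm}_{ii}[0]$ from \eqref{lpmo} for the other). No gaps; the bookkeeping you flag is precisely the routine check the paper leaves implicit.
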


\begin{proof}This is immediate from the Gauss decomposition formulas \eqref{gaussdec}--\eqref{eijmlqua}.
\end{proof}

The algebra $\U^{R}_q(\whg)$ (as well as $\U^{\ext}_q(\whg)$) admits a family of automorphisms
\begin{gather}\label{autom}
L^{+}(u)\mapsto \Sc L^{+}(u)\Fand L^{-}(u)\mapsto \Sc^{-1} L^{-}(u),
\end{gather}
parameterized by invertible diagonal matrices
$\Sc=\diag\ts [\si_1,\dots,\si_{N}]$
satisfying the conditions
\begin{gather*}
\Sc^{\ts\tra}=\Sc^{-1}=\Sc\For \Sc^{\ts\tra}=\Sc^{-1}=-\Sc.
\end{gather*}
The second condition can occur only for $N=2n$.
When applied to the isomorphic algebra $\U_q(\whg)\cong \U^{R}_q(\whg)$,
the automorphisms \eqref{autom} correspond to the sign automorphisms
considered
in \cite[Section~12.2.B]{cp:gq}.

\begin{Theorem}\label{thm:classi}\quad
\begin{enumerate}\itemsep=0pt
\item[$1.$] Any finite-dimensional irreducible representation of the algebra $\U^{R}_q(\whg)$
is a highest weight representation. Up to twisting this representation
with a suitable automorphism \eqref{autom}, its parameters satisfy the relations
\begin{gather}\label{domire}
\frac{\la^+_i(u)}{\la^+_{i+1}(u)}=q^{\deg P_i}\ts \frac{P_i\big(uq^{-2}\big)}{P_i(u)}=
\frac{\la^-_i(u)}{\la^-_{i+1}(u)},\qquad i=1,\dots,n-1,
\end{gather}
and
\begin{alignat*}{2}
& \frac{\la^+_n(u)}{\la^+_{n+1}(u)}=q^{\frac12\deg P_n}\ts \frac{P_n\big(uq^{-1}\big)}{P_n(u)}=
\frac{\la^-_n(u)}{\la^-_{n+1}(u)}\qquad&&\text{for type $B_n$},
\\
& \frac{\la^+_n(u)}{\la^+_{n+1}(u)}=q^{2\deg P_n}\ts \frac{P_n\big(uq^{-4}\big)}{P_n(u)}=
\frac{\la^-_n(u)}{\la^-_{n+1}(u)}\qquad&&\text{for type $C_n$},
\\
& \frac{\la^+_{n-1}(u)}{\la^+_{n+1}(u)}=q^{\deg P_n}\ts \frac{P_n\big(uq^{-2}\big)}{P_n(u)}=
\frac{\la^-_{n-1}(u)}{\la^-_{n+1}(u)}\qquad&&\text{for type $D_n$},
\end{alignat*}
for some polynomials $P_i(u)$ in $u$, all with constant term $1$, where the first and second
equalities are regarded in $\CC[[u]]$ and $\CC\big[\big[u^{-1}\big]\big]$, respectively.

\item[$2.$]
Every $n$-tuple $(P_1(u),\dots,P_n(u))$,
where each $P_i(u)$ is a polynomial in $u$ with constant term $1$,
arises in this way.

\item[$3.$]
The series $\la^{\pm}_i(u)$ satisfy the relations
\begin{gather}\label{larel}
\la^{\pm}_i\big(u\xi q^{2i}\big)\tss \la^{\pm}_{i'}(u)=\la^{\pm}_{i+1}\big(u\xi q^{2i}\big)\tss \la^{\pm}_{(i+1)'}(u),
\end{gather}
where $i=0,1,\dots,n$ for $N=2n+1$, and $i=0,1,\dots,n-1$ for $N=2n$, and we set
$\la^{\pm}_0(u)=\la^{\pm}_{0'}(u):=1$.
\end{enumerate}
\end{Theorem}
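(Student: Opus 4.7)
The strategy follows the Yangian template of Theorem~\ref{thm:yclassi}: transport the Chari--Pressley classification from the Drinfeld presentation $\U_q(\whg)$ to the $R$-matrix algebra $\U^R_q(\whg)$ through the isomorphism~\eqref{isoma}, using the consistency property of Proposition~\ref{prop:consi} to align the two triangular decompositions.

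Given a finite-dimensional irreducible representation $V$ of $\U^R_q(\whg)$, I would first view it as a $\U_q(\whg)$-module via the isomorphism. By the Chari--Pressley classification, a suitable sign automorphism of $\U_q(\whg)$ -- corresponding under the isomorphism to an automorphism of the form~\eqref{autom} -- twists $V$ into a type~$\vac$ representation with a highest vector $\ze'$ annihilated by all $x^+_{i,m}$ and a joint eigenvector for every $k_i$ and $a_{i,l}$ (plus the square roots of Remark~\ref{rem:ext}). From~\eqref{isoma} this gives $X^+_i(u)\tss\ze' = 0$. Since $e^+_{i,i+1}(u)$ contains only strictly positive powers of $u$ (its constant term vanishes because $l^+_{i,i+1}[0]=0$ by~\eqref{lpmo}) while $e^-_{i,i+1}(u)$ contains only non-positive powers of $u$, the identity $X^+_i(u) = e^+_{i,i+1}(u) - e^-_{i,i+1}(u)$ decouples into $e^{+}_{i,i+1}(u)\tss\ze'=0$ and $e^{-}_{i,i+1}(u)\tss\ze'=0$ separately, with the obvious modification of the second index at the $n$-th node in type $D_n$. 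Proposition~\ref{prop:consi} then implies that every coefficient of $e^\pm_{ij}(u)$ for $i<j$ lies in $\U_q(\whg)^+$ and hence kills $\ze'$, while every coefficient of $h^\pm_i(u)$ lies in $\U_q(\whg)^0$ and acts on $\ze'$ by a scalar $\la^{\pm}_i(u)$. By Proposition~\ref{prop:equiv}, $V$ is a highest weight representation in the sense of Definition~\ref{def:hwr}.

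The identities of Part~1 follow by matching eigenvalues. Comparing the eigenvalue of $\varphi_i(u)$ (respectively $\psi_i(u)$) on $\ze'$ given by the Chari--Pressley formula $q_i^{-\deg P_i}\tss P_i(uq_i^2)/P_i(u)$ with the ratio $h^{+}_{i+1}(uq^{a_i})\tss h^{+}_i(uq^{a_i})^{-1}$ (resp.\ the $h^{-}$ analogue) coming from~\eqref{isoma}, and absorbing the overall shift by a rescaling $P_i(u) \mapsto P_i(uq^{b_i})$ for a type-dependent integer $b_i$, reproduces~\eqref{domire} and its $n$-th node variants. The symmetry relations~\eqref{larel} of Part~3 are immediate from applying Lemma~\ref{lem:gaurell} to $\ze'$ together with $z^\pm(u)=1$ in $\U^R_q(\whg)$; the case $i=0$ is precisely the relation $z^{\pm}(u)\tss\ze'=\ze'$. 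Part~2 follows by running the argument in reverse: given an $n$-tuple of polynomials, the Chari--Pressley theorem supplies a representation of $\U_q(\whg)$ with the prescribed Drinfeld polynomials, and pushing it through the isomorphism yields the required representation of $\U^R_q(\whg)$.

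The main obstacle will be the type-dependent bookkeeping at the node $\al_n$: the spectral-parameter shifts $uq^n$, $uq^{n+1}$, $uq^{n-1}$ together with the extra factor $[2]_{q_n}^{-1/2}$ in~\eqref{isoma} must be verified to reproduce the precise shifts $q^{-1}$, $q^{-4}$, $q^{-2}$ inside $P_n(\cdot)$ in types $B_n$, $C_n$, $D_n$ respectively, which amounts to a routine but case-by-case calculation. A secondary but essential point is the separation argument above, which depends on the constant-term vanishing condition from~\eqref{lpmo}; it is this decoupling that converts the ``simple root'' annihilation $X^+_i(u)\tss\ze'=0$ into the two independent annihilations needed to trigger Proposition~\ref{prop:consi}.
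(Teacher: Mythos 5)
Your proposal is correct and follows essentially the same route as the paper: transport the Chari--Pressley classification through the isomorphism \eqref{isoma} (your explicit decoupling of $X^+_i(u)\tss\ze'=0$ via the disjoint powers of $u$ in $e^{\pm}_{i,i+1}(u)$ just makes precise a step the paper leaves implicit), use Proposition~\ref{prop:consi} with Proposition~\ref{prop:equiv} for the highest weight property, match the eigenvalues of $\varphi_i(u)$, $\psi_i(u)$ up to a rescaling of the argument of $P_i$ for Part~1, and apply Lemma~\ref{lem:gaurell} together with $z^{\pm}(u)=1$ for Part~3. The only minor divergence is Part~2, where you invoke the surjectivity of the Drinfeld-polynomial classification and transport back, while the paper spells out the tensor-product argument in the $R$-matrix realization combined with the existence of fundamental representations from \cite{cp:ga}; both rest on the same Chari--Pressley results.
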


\begin{proof}Using the isomorphism $\U_q(\whg)\cong \U^{R}_q(\whg)$ and the classification results
recalled in Section~\ref{subsec:cr}, we find that any type $\vac$ finite-dimensional irreducible representation $V$ of the algebra~$\U_q(\whg)$
is generated by a vector $\ze$ such that
\begin{alignat}{2}
& e^{\pm}_{i,i+1}(u)\tss\ze=0\qquad&&\text{for}\quad i=1,\dots,n,
\label{eiipoze}\\
& h^{\pm}_{i}(u)\ts\ze=\la^{\pm}_i(u)\ts\ze\qquad&&\text{for}\quad i=1,\dots,n+1,\nonumber
\end{alignat}
for some formal series $\la^{+}_i(u)\in\CC[[u]]$ and $\la^{-}_i(u)\in\CC\big[\big[u^{-1}\big]\big]$,
where for type $D_n$ relation~\eqref{eiipoze} with $i=n$
should be replaced with $e^{\pm}_{n-1,n+1}(u)\tss\ze=0$.
Proposition~\ref{prop:consi} implies that $V$ is a highest weight representation
of $\U^{R}_q(\whg)$. Relations~\eqref{domire} follow from
the results of \cite[Section~12.2]{cp:gq}. They can also be derived by considering
a subalgebra $\U_i\subset \U^{R}_q(\whg)$ associated with the $i$-th simple root of $\g$
such that $\U_i\cong \U_q\big(\wh\sll_2\big)$.
The cyclic span $\U_i \ze$ is a finite-dimensional $\U_q(\wh\sll_2)$-module
which yields the required conditions on the series $\la^{\pm}_i(u)$;
see \cite[Section~3.1]{gm:rt} for more details on this approach going back to~\cite{t:im}.

Part~2 of the theorem follows from the classification results of \cite[Section~12.2]{cp:gq}.
The proof relies on the Hopf algebra structure
on $\U^{R}_q(\whg)$ introduced in
Section~\ref{subsubsec:ha}. It is implied by a~well-known property
of tensor products of representations. Namely,
suppose that~$V$ and~$W$ are finite-dimensional irreducible representations
of $\U^{R}_q(\whg)$ with respective highest vectors $\ze$ and $\eta$,
with the parameter series $\big(\la^{\pm}_1(u),\dots,\la^{\pm}_N(u)\big)$
and $\big(\mu^{\pm}_1(u),\dots,\mu^{\pm}_N(u)\big)$. The coproduct formula~\eqref{Delta}
implies that the irreducible quotient~$X$ of the cyclic span
\begin{gather*}
\U^{R}_q(\whg)(\ze\ot\eta)\subset V\ot W
\end{gather*}
is a highest weight representation with
the parameter series $\big(\la^{\pm}_1(u)\mu^{\pm}_1(u),\dots,\la^{\pm}_N(u)\mu^{\pm}_N(u)\big)$.
Therefore, if $V$ and $W$ are associated with the respective $n$-tuples
of polynomials
\begin{gather*}
(P_1(u),\dots,P_n(u))\Fand(Q_1(u),\dots,Q_n(u)),
\end{gather*}
then
by the formulas of Part~1, the representation $X$
is associated with the $n$-tuple
\begin{gather*}
\big(P_1(u)Q_1(u),\dots,P_n(u)Q_n(u)\big).
\end{gather*}
Hence, to complete the proof of Part~2, one only needs
to produce a finite-dimensional irreducible representation to each
$n$-tuple of the form
\begin{gather*}
\big(1,\dots,1,au+1,1,\dots,1\big),\qquad a\in \CC.
\end{gather*}
The existence of such
{\em fundamental representations} of the quantum affine algebra was established in
\cite{cp:ga}.
Part~3 of the theorem is immediate from Lemma~\ref{lem:gaurell}.
\end{proof}

\begin{Corollary}\label{cor:repe}
All statements of Theorem~{\rm \ref{thm:classi}} hold in the same form
for the algebra $\U^{\ext}_q(\whg)$, except that the value $i=0$ is excluded for the conditions
\eqref{larel}.
\end{Corollary}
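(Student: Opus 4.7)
The plan is to adapt the strategy used for Corollary~\ref{cor:yrepe} in the Yangian case, leveraging the tensor product decomposition $\U^{\ext}_q(\whg) \cong \U_q(\whg) \ot \Cc$ of Section~\ref{subsev:zu} together with Lemma~\ref{lem:gaurell}. First I would show that any finite-dimensional irreducible representation $V$ of $\U^{\ext}_q(\whg)$ is a highest weight representation in the sense of Definition~\ref{def:hwr}. Since $\Cc$ is central, it acts on $V$ by scalars via Schur's lemma, so the coefficients of the series $\ze^{\pm}(u)$ act as multiplication by prescribed scalar power series. Modulo this central action, $V$ becomes a finite-dimensional irreducible representation of $\U_q(\whg)$, which by the reasoning of Theorem~\ref{thm:classi} (combined with Proposition~\ref{prop:consi} and Proposition~\ref{prop:equiv}) admits a highest vector $\ze$ annihilated by all $e^{\pm}_{ij}(u)$ with $i<j$. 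The same vector $\ze$ serves as highest vector for the original $\U^{\ext}_q(\whg)$-module.

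Next I would verify Parts~1 and~2. The relations in Part~1 involve only the ratios $\la^{\pm}_i(u)/\la^{\pm}_{i+1}(u)$, and under the isomorphism of Section~\ref{subsec:rm} these ratios correspond precisely to the eigenvalues of $\vp_i(u)$ and $\psi_i(u)$ acting on $\ze$ in the subalgebra $\U_q(\whg)$; hence they carry over verbatim from Theorem~\ref{thm:classi}. For Part~2, given an $n$-tuple $(P_1(u),\dots,P_n(u))$, one realizes it first as a finite-dimensional irreducible $\U_q(\whg)$-module and then composes with a suitable twisting automorphism $\mu_f$ from~\eqref{muf}. Since $\mu_f\colon \ze^{\pm}(u)\mapsto f^{\pm}(u)\tss \ze^{\pm}(u)$, the central character of $\Cc$ can be adjusted freely, while the Drinfeld polynomials are unaffected because the ratios $\la^{\pm}_i(u)/\la^{\pm}_{i+1}(u)$ are invariant under $\mu_f$.

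Finally, for Part~3, the identities~\eqref{larel} with $i\geqslant 1$ are obtained by evaluating the relation $h^{\pm}_i(u\xi q^{2i})\tss h^{\pm}_{i'}(u)=h^{\pm}_{i+1}(u\xi q^{2i})\tss h^{\pm}_{(i+1)'}(u)$ of Lemma~\ref{lem:gaurell} on $\ze$. The $i=0$ case in Theorem~\ref{thm:classi} encoded the relation $z^{\pm}(u)=1$, equivalently $h^{\pm}_1(u\xi)\tss h^{\pm}_{1'}(u)=1$, which no longer holds in $\U^{\ext}_q(\whg)$: instead $z^{\pm}(u)$ acts on $\ze$ as multiplication by the arbitrary scalar series $\la^{\pm}_1(u\xi)\tss \la^{\pm}_{1'}(u)$, so this constraint is precisely what must be dropped. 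The only point that needs some care, and which I expect to be the mildest obstacle, is ensuring compatibility between the constant terms of $\la^{+}_i(u)$ and $\la^{-}_i(u)$ and the normalization $f^{+}[0]\tss f^{-}[0]=1$ from~\eqref{furu}; this matches the observation recorded after Definition~\ref{def:hwr} and shows that every admissible family of series satisfying~\eqref{larel} for $i\geqslant 1$ can be realized.
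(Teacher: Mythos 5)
Your argument is correct and follows essentially the same route as the paper, whose proof simply observes that the arguments of Theorem~\ref{thm:classi} apply verbatim to $\U^{\ext}_q(\whg)$ and that the $i=0$ case of~\eqref{larel} is replaced by the statement that $z^{\pm}(u)$ acts on the highest weight representation as multiplication by $\la^{\pm}_{1}(u\xi)\tss\la^{\pm}_{1'}(u)$ --- exactly the point you make via the decomposition $\U^{\ext}_q(\whg)\cong\U_q(\whg)\ot\Cc$ and Schur's lemma. Your expansion (central character of $\Cc$ adjustable by $\mu_f$, Drinfeld polynomial ratios unchanged, Lemma~\ref{lem:gaurell} for $i\geqslant 1$) is a faithful fleshing-out of that terse proof, with only the cosmetic caveat that in Part~2 one should first extend the $\U_q(\whg)$-module to $\U^{\ext}_q(\whg)$ through the tensor decomposition before twisting by $\mu_f$.
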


\begin{proof}
The arguments used in the proof of the theorem equally apply to the representations of
the algebra $\U^{\ext}_q(\whg)$. Relation~\eqref{larel} with $i=0$
is now replaced by the property that the series~$z^{\pm}(u)$ acts in the highest weight
representation as multiplication by $\la^{\pm}_{1}(u\xi)\tss \la^{\pm}_{1'}(u)$.
\end{proof}

\subsubsection[Representations of quantum affine algebras in type A]{Representations of quantum affine algebras in type $\boldsymbol{A}$}\label{subsubsec:sera}

Corresponding versions of the classification results described in Theorem~\ref{thm:classi}
for the quantum affine algebras in type $A$ are well-known.
They can be derived in the same way as for the types~$B$,~$C$ and~$D$
from the isomorphism between the $R$-matrix and Drinfeld presentations
of the quantum affine algebra $\U_q(\whg)$ for $\g=\sll_n$ constructed in~\cite{df:it}. On the other hand, an independent
proof of the classification theorem
for the quantum affine algebras in the $R$-matrix presentation,
which we state below, was given in~\cite{gm:rt}
following the original approach of~\cite{t:im}.

The quantum affine algebra $\U_q(\wh\sll_n)$ (with the trivial central charge)
is defined by the formulas of Section~\ref{subsec:cr}, where the simple roots
are chosen in the form~\eqref{aroots}.
Finite-dimensional irreducible representations of $\U_q(\wh\sll_n)$ are described
by the results recalled in that section.

Consider the $R$-matrix defined by
\begin{gather*}
R_A(u)=\sum_{i,j=1}^n\tss \big(u\tss q^{\de_{ij}}-q^{-\de_{ij}}\big)\tss e_{ii}\ot e_{jj}
+\big(q-\qin\big)\tss\sum_{i,j=1}^n(u\ts\delta_{i<j} +\delta_{i>j})\ts e_{ij}\ot e_{ji}.
\end{gather*}
The quantum affine algebra $\U^{R}_q\big(\wh\gl_n\big)$
(with the trivial central charge) is generated by
elements~${l}^{\pm}_{ij}[\mp m]$ with $1\leqslant i,j\leqslant n$ and $m\in \ZZ_{+}$
subject to the defining relations described in \eqref{lpmo}--\eqref{rllmp},
where the parameter~$N$ is replaced by~$n$ and the $R$-matrix \eqref{ru}
is replaced by~$R_A(u)$. The Hopf algebra structure on $\U^{R}_q\big(\wh\gl_n\big)$ is
described by the maps \eqref{Delta}, \eqref{S} and~\eqref{cou}.
The algebra $\U^{R}_q\big(\wh\sll_n\big)$ is a Hopf subalgebra
of $\U^{R}_q\big(\wh\gl_n\big)$ which consists of all elements which are stable
with respect to all automorphisms
\begin{gather*}
L^{\pm}(u)\mapsto f^{\pm}(u)\tss L^{\pm}(u),
\end{gather*}
where $f^{\pm}(u)$ are arbitrary series satisfying conditions \eqref{furu}.
By the results of~\cite{df:it} the maps~\eqref{isoma} define a Hopf algebra isomorphism
$\U_q\big(\wh\sll_n\big)\to \U^{R}_q\big(\wh\sll_n\big)$, where the Gaussian generators are
defined by~\eqref{gaussdec}.
The algebra $\U^{R}_q\big(\wh\sll_n\big)$ \big(as well as $\U^{R}_q\big(\wh\gl_n\big)$\big) admits
a family of automorphisms~\eqref{autom}
parameterized by diagonal matrices $\Sc=\diag\ts [\si_1,\dots,\si_{n}]$
such that $\si_i=\pm 1$ for all~$i$.

Highest weight representations of the algebra $\U^{R}_q(\wh\gl_n)$ are defined
in the same way as in Definition~\ref{def:hwr}.
By restrictions, we get highest weight representations of
the subalgebra $\U^{R}_q\big(\wh\sll_n\big)$.
Proposition~\ref{prop:equiv} holds in the same form.
The following theorem is contained in \cite[Theorem~3.6]{gm:rt}.
The proof of Theorem~\ref{thm:classi} applied to the
quantum affine algebras of type $A$ provides
another derivation of that result.

\begin{Theorem}\label{thm:classia}\quad
\begin{enumerate}\itemsep=0pt
\item[$1.$]
Any finite-dimensional irreducible representation of the algebra $\U^{R}_q\big(\wh\gl_n\big)$
{\rm \big(}and $\U^{R}_q\big(\wh\sll_n\big)${\rm \big)}
is a highest weight representation. Up to twisting this representation
with a suitable automorphism \eqref{autom}, its parameters satisfy the relations
\begin{gather*}
\frac{\la^+_i(u)}{\la^+_{i+1}(u)}=q^{\deg P_i}\ts \frac{P_i\big(uq^{-2}\big)}{P_i(u)}=
\frac{\la^-_i(u)}{\la^-_{i+1}(u)},\qquad i=1,\dots,n-1,
\end{gather*}
for some polynomials $P_i(u)$ in $u$, all with constant term $1$,
where the equalities are understood for the expansions of the rational function in $u$
as a series in $u$ and $u^{-1}$, respectively.

\item[$2.$]
Every $n$-tuple of polynomials $(P_1(u),\dots,P_{n-1}(u))$ in $u$,
where each $P_i(u)$ has constant term~$1$,
arises in this way.
\end{enumerate}
\end{Theorem}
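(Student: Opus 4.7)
The plan is to run the argument of Theorem~\ref{thm:classi} in the type~$A$ setting, relying on the Ding--Frenkel isomorphism $\U_q(\wh\sll_n)\cong \U^R_q(\wh\sll_n)$ of \cite{df:it} in place of the isomorphisms of \cite{jlm:ib-c,jlm:ib-bd}. Let $V$ be a finite-dimensional irreducible representation of $\U^R_q(\wh\gl_n)$. Pulling $V$ back through the isomorphism and restricting to $\U^R_q(\wh\sll_n)\cong \U_q(\wh\sll_n)$, and applying a sign automorphism from \cite[Section~12.2.B]{cp:gq}, one reduces to the case where the underlying $\U_q(\wh\sll_n)$-module is of type $\vac$. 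Under the isomorphism, such sign twists correspond exactly to the automorphisms \eqref{autom} with $\Sc=\diag\ts[\si_1,\dots,\si_n]$ and $\si_i=\pm 1$. Therefore the classification recalled in Section~\ref{subsec:cr} provides a cyclic vector $\ze\in V$ with $x^+_i(u)\tss\ze=0$ for all $i$ and with $\vp_i(u)\tss\ze$, $\psi_i(u)\tss\ze$ given by the ratios $q^{-\deg P_i}P_i(uq^{-2})/P_i(u)$ for polynomials $P_i(u)$ of constant term $1$.

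Translating via \eqref{isoma} (with $N$ replaced by $n$), the conditions $x^+_i(u)\tss\ze=0$ become $e^+_{i,i+1}(u)\tss\ze=e^-_{i,i+1}(u)\tss\ze=0$ for $i=1,\dots,n-1$, while the Cartan eigenvalues yield scalars $\la^{\pm}_i(u)$ such that $h^{\pm}_i(u)\tss\ze=\la^{\pm}_i(u)\tss\ze$ and the prescribed ratios in Part~1. To upgrade this into the $R$-matrix highest weight condition of Definition~\ref{def:hwr}, I would invoke the type~$A$ consistency property for the Gauss and triangular decompositions established in \cite{kpt:ob}, which expresses each $e^{\pm}_{ij}(u)$ with $j>i+1$ as a nested $q$-commutator of the simple-root Gaussian generators $e^{\pm}_{k,k+1}(u)$; consequently $e^{\pm}_{ij}(u)\tss\ze=0$ for all $1\leqslant i<j\leqslant n$. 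Proposition~\ref{prop:equiv}, which holds verbatim in type~$A$, then converts this into the annihilation $l^{\pm}_{ij}(u)\tss\ze=0$ and the diagonal action $l^{\pm}_{ii}(u)\tss\ze=\la^{\pm}_i(u)\tss\ze$ required in the Definition. The ratio formula of Part~1 is a direct rewriting of the eigenvalues of $\vp_i(u)$ and $\psi_i(u)$ using the explicit shifts $uq^i$ in \eqref{isoma}.

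For Part~2, the construction is identical to the type $B$, $C$, $D$ case in Theorem~\ref{thm:classi}: using the Hopf algebra structure \eqref{Delta} on $\U^R_q(\wh\gl_n)$, if $V$ and $W$ are highest weight with Drinfeld tuples $(P_i)$ and $(Q_i)$ respectively, the irreducible quotient of $\U^R_q(\wh\gl_n)(\ze\ot\eta)\subset V\ot W$ has Drinfeld tuple $(P_i Q_i)$. It therefore suffices to realize each elementary tuple $(1,\dots,1,au+1,1,\dots,1)$ by a finite-dimensional irreducible representation; these fundamental representations are furnished by \cite{cp:ga}, and their transport through the Ding--Frenkel isomorphism produces the required modules over $\U^R_q(\wh\sll_n)$, with the $\gl_n$-version obtained by extending the highest weight on the additional central diagonal series. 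The main obstacle in this proof scheme is the consistency step identifying the upper triangular Gaussian generators with nested $q$-commutators of the simple-root ones; happily, \cite{kpt:ob} supplies exactly this ingredient for type~$A$, so the remaining work is essentially bookkeeping with the $q$-shifts in \eqref{isoma} and the verification that sign twists on the two sides correspond.
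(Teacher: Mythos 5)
Your proposal is correct and takes essentially the same route the paper intends: the paper gives no written proof here, instead citing \cite[Theorem~3.6]{gm:rt} and remarking that the proof of Theorem~\ref{thm:classi}, transported to type $A$ through the Ding--Frenkel isomorphism of \cite{df:it} with the consistency property supplied by \cite{kpt:ob}, yields another derivation --- which is precisely the argument you spell out, including the sign-automorphism reduction to type $\vac$, the use of Proposition~\ref{prop:equiv}, and the tensor-product plus fundamental-representation argument of \cite{cp:ga} for Part~2. Your brief handling of the passage between $\U^{R}_q\big(\wh\gl_n\big)$ and $\U^{R}_q\big(\wh\sll_n\big)$ (central quantum-determinant-type series acting by scalars) is at the same level of detail as the paper's treatment of the extended algebras in Corollary~\ref{cor:repe}, so no genuine gap remains.
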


\subsubsection{Outlook}

The isomorphisms of \cite{jlm:ib-c,jlm:ib-bd}
do not rely on any Poincar\'{e}--Birkhoff--Witt-type theorem for
the algebra $\U^{R}_q(\whg)$. Therefore, such a theorem is implied by
the results of Beck~\cite{b:bg}. In particular,
by Proposition~\ref{prop:consi}, the image of the basis
of the subalgebra $\U_q(\whg)^+$ under the isomorphism \mbox{$\U_q(\whg)\to \U^R_q(\whg)$}
is a basis of the subalgebra $\U^R_q(\whg)^+$ generated by the coefficients
of all series~$e^{\pm}_{ij}(u)$. However, a precise expression of the basis elements
in terms of these coefficients is unknown.

Note also that the version of the Poincar\'{e}--Birkhoff--Witt theorem for
the algebra $\U_q\big(\wh\gl_N\big)$ given in \cite[Corollary~2.13]{gm:rt}
does not immediately extend to types~$B$,~$C$ and $D$, because of more complicated
defining relations on the generators $l^{\pm}_{ij}[m]$.
It would be interesting to find an alternative form of
the quadratic relations for the generator
series to lead to such a version.

\appendix

\section{Modified isomorphism for Yangians}\label{asec:isy}

Here we give a
modified version of the isomorphism produced in
\cite[Main Theorem]{jlm:ib} which can be used to establish a different
correspondence between the parameters of representations in the two realizations
of the Yangians; cf.\ Section~\ref{subsubsec:isy}.
An analogous isomorphism was used for type $A$ in \cite[Section~3.1]{m:yc}; see also
\cite{bk:pp} for an isomorphism with an opposite presentation of the Yangian $\Y(\sll_N)$.
The new version is based on the alternative Gauss decomposition of the mat\-rix~$T(u)$, defined by
\begin{gather}\label{mgd}
T(u)=\overline E(u)\ts \overline H(u)\ts \overline F(u),
\end{gather}
where $\overline E(u)$, $\overline H(u)$ and $\overline F(u)$
are uniquely determined matrices of the form
\begin{gather*}
\overline E(u)=\begin{bmatrix}
\ts1&\bar e_{12}(u)&\dots&\bar e_{1N}(u)\ts\\
\ts0&1&\dots&\bar e_{2N}(u)\\
\vdots&\vdots&\ddots&\vdots\\
0&0&\dots&1
\end{bmatrix},
\qquad
\overline F(u)=\begin{bmatrix}
1&0&\dots&0\ts\\
\bar f_{21}(u)&1&\dots&0\\
\vdots&\vdots&\ddots&\vdots\\
\bar f_{N1}(u)&\bar f_{N2}(u)&\dots&1
\end{bmatrix},
\end{gather*}
and $\overline H(u)=\diag\ts \big[\bar h_1(u),\dots,\bar h_N(u) \big]$. Define the series
with coefficients in $\Y(\g)$ by
\begin{gather*}
\kappa^{}_i(u)=\bar h_i (u+(i-1)/2 )\ts \bar h_{i+1} (u+(i-1)/2 )^{-1}
\end{gather*}
for $i=1,\dots,n-1$, and
\begin{gather*}
\kappa^{}_n(u)=\begin{cases}
\bar h_n (u+(n-1)/2 )\ts \bar h_{n+1} (u+(n-1)/2 )^{-1}
 &\text{for}\ \oa_{2n+1},\\
\tss \bar h_n (u+n/2 )\ts \bar h_{n+1} (u+n/2 )^{-1}
 &\text{for}\ \spa_{2n},\\
\bar h_{n-1} (u+(n-2)/2 )\ts \bar h_{n+1} (u+(n-2)/2 )^{-1}
 &\text{for}\ \oa_{2n}.
\end{cases}
\end{gather*}
Furthermore, set
\begin{gather*}
\xi_i^+(u)=\bar e_{i\ts i+1}\big(u+(i-1)/2\big),\qquad
\xi_i^-(u)=\bar f_{i+1\ts i}\big(u+(i-1)/2\big)
\end{gather*}
for $i=1,\dots,n-1$,
\begin{gather*}
\xi_n^+(u)=\begin{cases}
\bar e_{n\ts n+1} (u+(n-1)/2 \big)
 &\text{for}\ \oa_{2n+1},\\
\bar e_{n\ts n+1}\big(u+n/2\big)
 &\text{for}\ \spa_{2n},\\
\bar e_{n-1\ts n+1}\big(u+(n-2)/2\big)
 &\text{for}\ \oa_{2n}
\end{cases}
\end{gather*}
and
\begin{gather*}
\xi_n^-(u)=\begin{cases}
\bar f_{n+1\ts n} (u+(n-1)/2 )
 &\text{for}\ \oa_{2n+1},\\
\frac{1}{2}\bar f_{n+1\ts n} (u+n/2 )
 &\text{for}\ \spa_{2n},\\
\bar f_{n+1\ts n-1} (u+(n-2)/2 )
 &\text{for}\ \oa_{2n}.
\end{cases}
\end{gather*}
Introduce elements of $\Y(\g)$ by the respective expansions
into power series in $u^{-1}$,
\begin{gather*}
\kappa^{}_i(u)=1+\sum_{r=0}^{\infty}\kappa^{}_{i\tss r}\ts u^{-r-1}\Fand
\xi_i^{\pm}(u)=\sum_{r=0}^{\infty}\xi_{i\tss r}^{\pm}\ts u^{-r-1}
\end{gather*}
for $i=1,\dots,n$.

\begin{Theorem}\label{thm:misom}
The mapping which sends the generators
$\kappa^{}_{i\tss r}$ and $\xi_{i\tss r}^{\pm}$ of $\Y^{D}(\g)$ to the elements
of $\Y(\g)$ with the same names defines an isomorphism
$\Y^{D}(\g)\cong \Y(\g)$.
\end{Theorem}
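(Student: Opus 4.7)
The plan is to reduce Theorem~\ref{thm:misom} to the Main Theorem of \cite{jlm:ib} by exhibiting an automorphism of $\Y(\g)$ that interchanges the ordinary and opposite Gauss decompositions of $T(u)$. A natural candidate is the conjugation $\Omega\colon T(u)\mapsto J\tss T(u)\tss J$ in $\End\CC^N\ot\X(\g)$, where $J=\sum_i e_{i\tss i'}$ is the reversal matrix of the permutation $i\mapsto i'=N+1-i$. That this defines an automorphism of $\X(\g)$ (and hence descends to $\Y(\g)$) follows from the $R$-matrix invariance $(J\ot J)\tss R(u)\tss(J\ot J)=R(u)$, which one verifies directly from the relations $(J\ot J)\tss P\tss(J\ot J)=P$ and $(J\ot J)\tss Q\tss(J\ot J)=Q$ for the constituents of $R(u)$.

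The key observation is that conjugation by $J$ reverses the ordering of the basis, thereby exchanging upper- and lower-triangular matrix forms. Concretely, if $T(u)=F(u)H(u)E(u)$ is the ordinary Gauss decomposition, then $\Omega(T(u))$ admits an ordinary Gauss decomposition $F'(u)H'(u)E'(u)$ whose entries are precisely the images under $i\mapsto i'$ of the entries of the opposite Gauss decomposition $T(u)=\bar E(u)\bar H(u)\bar F(u)$: namely, $F'(u)=J\bar E(u)J$, $H'(u)=J\bar H(u)J$, and $E'(u)=J\bar F(u)J$. In particular, the coefficients of $\bar e_{ij}(u)$, $\bar h_i(u)$, and $\bar f_{ji}(u)$ coincide, up to the involution $i\mapsto i'$, with those of the ordinary Gauss generators of $\Omega(T(u))$.

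The modified map in the statement is then obtained by composing the Main Theorem isomorphism of \cite{jlm:ib} applied to $\Omega(T(u))$ with the identification of Drinfeld generators of $\Y^D(\g)$ induced by the Weyl involution $w_0$. The change in the spectral parameter shifts from $-(i-1)/2$ in the original statement to $+(i-1)/2$ in the modified one is consistent with the diagonal reversal in $H'(u)$. For type $D_n$ the nontrivial Dynkin diagram involution plays a role; for types $B_n$ and $C_n$, where $w_0$ acts as $-1$ on the root space, the composition absorbs into the automorphism $\omega\colon\xi^\pm_{ir}\mapsto(-1)^{r+1}\xi^\mp_{ir}$, $\ka_{ir}\mapsto(-1)^{r+1}\ka_{ir}$ of $\Y^D(\g)$ that was recalled earlier in the excerpt.

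The main obstacle is the precise bookkeeping of indices, shifts, and signs across all three classical types. In particular, one must reconcile the reversal $i\mapsto i'$ on the $\Y(\g)$ side with the appropriate automorphism of the Drinfeld presentation, verifying that the triples $(\bar e_{ij}(u),\bar h_i(u),\bar f_{ji}(u))$ with the shifts prescribed in the statement are indeed the correct images. A cleaner alternative is to bypass these combinatorial subtleties by verifying the Drinfeld defining relations for the modified Gaussian generators directly, paralleling the calculations in \cite[Section~5]{jlm:ib}; since the RTT relations are themselves invariant under $\Omega$, each step of those calculations transfers with the appropriate replacements of $-(i-1)/2$ by $+(i-1)/2$ and of $F,H,E$ by $\bar E,\bar H,\bar F$.
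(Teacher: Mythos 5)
Your first step is essentially the paper's: conjugation by the reversal matrix $J$ is exactly the automorphism \eqref{mausi}, $\vs\colon t_{ij}(u)\mapsto t_{i'j'}(u)$, and your identification of the opposite Gauss decomposition of $T(u)$ with the ordinary one of $J\tss T(u)\tss J$ (via uniqueness) is the same observation the paper makes via invariance of quasideterminants under row and column permutations. The gap comes after that. Composing the Main Theorem isomorphism of \cite{jlm:ib} with this automorphism sends $\xi^+_i(u)\mapsto \bar e_{(i+1)'\tss i'}\big(u-(i-1)/2\big)$, $\xi^-_i(u)\mapsto \bar f_{i'\tss (i+1)'}\big(u-(i-1)/2\big)$ and $\kappa_i(u)\mapsto \bar h_{i'}\big(u-(i-1)/2\big)^{-1}\bar h_{(i+1)'}\big(u-(i-1)/2\big)$, i.e.\ barred generators at the \emph{reflected} indices near $N$, whereas the theorem requires $\bar e_{i\tss i+1}\big(u+(i-1)/2\big)$, $\bar f_{i+1\tss i}\big(u+(i-1)/2\big)$ and $\bar h_i\bar h_{i+1}^{-1}$ at the unreflected indices. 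No precomposition with an automorphism of $\Y^D(\g)$ can bridge this: it keeps the images inside the subalgebra generated by the reflected-index series, so what is needed is an identity \emph{inside} $\Y(\g)$ expressing $\bar e_{(i+1)'\tss i'}(u)$ through $\bar e_{i\tss i+1}$ at a shifted argument. This is exactly where the paper invokes the symmetry relations forced by the unitarity condition \eqref{unitaryint} (from Section~5.3 of \cite{jlm:ib}): relations \eqref{yhiipone} together with $e_{(i+1)'\tss i'}(u)=-e_{i\tss i+1}(u+\ka-i)$, $f_{i'\tss(i+1)'}(u)=-f_{i+1\tss i}(u+\ka-i)$ and type-specific analogues, after which the shift automorphism $T(u)\mapsto T(u+\ka-1)$ and the automorphism multiplying all $\xi^{\pm}_{i\tss r}$ by $-1$ produce the stated formulas. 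Note these relations hold in $\Y(\g)$ but not in $\X(\g)$, so they are genuinely extra input; your proposal never uses them, and the passage from the shift $-(i-1)/2$ to $+(i-1)/2$ is not explained by ``diagonal reversal'' --- it comes from the $\ka$-dependent shifts in these relations combined with the shift automorphism.

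The proposed correction on the Drinfeld side is also misdirected. Both the composed map and the target map send $\xi^+_i$ to coefficients of upper-triangular ($\bar e$-type) series and $\xi^-_i$ to $\bar f$-type series, so no interchange of $\xi^+$ and $\xi^-$ occurs; composing with the automorphism $\omega\colon\xi^{\pm}_{i\tss r}\mapsto(-1)^{r+1}\xi^{\mp}_{i\tss r}$ would introduce an unwanted chirality swap, and a $w_0$-induced diagram twist does not appear in the final statement at all. Finally, the fallback of ``verifying the Drinfeld relations directly, since the RTT relations are $\Omega$-invariant'' is not a proof: invariance of the RTT relations only re-establishes that $\Omega$ is an automorphism (your first step); it neither shows that the barred generators with the $+$ shifts satisfy the Drinfeld relations nor gives bijectivity, which in \cite{jlm:ib} occupies all of Section~5 and again relies on the unitarity-induced relations.
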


\begin{proof}
We will derive this result from \cite[Main Theorem]{jlm:ib}
as recalled in Section~\ref{subsubsec:isy}, by taking the composition
of the isomorphism
$\Y^{D}(\g)\to\Y(\g)$ constructed therein, with certain automorphisms
of the algebra $\Y(\g)$.
We have the following quasideterminant formulas for
the entries of the matrices $\overline H(u)$, $\overline E(u)$ and $\overline F(u)$ \cite{gr:tn}:
\begin{gather}\label{myhmqua}
\bar h_i(u)=\begin{vmatrix} \boxed{t_{i\tss i}(u)}&t_{i\ts i+1}(u)&\dots&t_{i\tss N}(u)\\
 t_{i+1\ts i}(u)&t_{i+1\ts i+1}(u)&\dots&t_{i+1\tss N}(u)\\
 \vdots&\vdots&\ddots&\vdots\\
 t_{N\tss i}(u)&t_{N\ts i+1}(u)&\dots&t_{N\tss N}(u)
 \end{vmatrix},\qquad i=1,\dots,N,
\end{gather}
whereas
\begin{gather}\label{myeijmlqua}
\bar e_{ij}(u)=\begin{vmatrix} \boxed{t_{i\tss j}(u)}&t_{i\ts j+1}(u)&\dots&t_{i\tss N}(u)\\
 t_{j+1\ts j}(u)&t_{j+1\ts j+1}(u)&\dots&t_{j+1\tss N}(u)\\
 \vdots&\vdots&\ddots&\vdots\\
 t_{N\tss j}(u)&t_{N\ts j+1}(u)&\dots&t_{N\tss N}(u)
 \end{vmatrix} \ts h_j(u)^{-1}
\end{gather}
and
\begin{gather}\label{myfijlmqua}
\bar f_{ji}(u)=h_j(u)^{-1}\ts\begin{vmatrix} \boxed{t_{j\tss i}(u)}&t_{j\ts j+1}(u)&\dots&t_{j\tss N}(u)\\
 t_{j+1\ts i}(u)&t_{j+1\ts j+1}(u)&\dots&t_{j+1\tss N}(u)\\
 \vdots&\vdots&\ddots&\vdots\\
 t_{N\tss i}(u)&t_{N\ts j+1}(u)&\dots&t_{N\tss N}(u)
 \end{vmatrix}
\end{gather}
for $1\leqslant i<j\leqslant N$. The Gaussian generators $h_i(u)$, $e_{ij}(u)$ and $f_{ji}(u)$
are defined by using the decomposition \eqref{gd} dual to \eqref{mgd},
and are given by
the respective dual quasideterminant formulas; see \cite[Section~4]{jlm:ib}.
On the other hand, the mapping
\begin{gather}\label{mausi}
\vs\colon \ t_{ij}(u)\mapsto t_{i'j'}(u),\qquad 1\leqslant i,j\leqslant N,
\end{gather}
defines an involutive automorphism of the algebra $\Y(\g)$.
Since quasideterminants are unchanged under permutations of rows or columns,
we find that the images of the Gaussian generators are given by
\begin{gather*}
\vs\colon \ h_i(u)\mapsto \bar h_{i'}(u),\qquad e_{ij}(u)\mapsto \bar f_{i'j'}(u),
\qquad f_{ji}(u)\mapsto \bar e_{j'i'}(u).
\end{gather*}
Furthermore, the unitary condition \eqref{unitaryint} implies symmetry
relations for the Gaussian generators which were described in
\cite[Section~5.3]{jlm:ib}.\footnote{The formulas in \cite[Proposition~5.7]{jlm:ib} concerning
the Lie algebra $\oa_{2n+1}$
should be corrected as follows: $e_{n+1\ts n+2}(u)=-e_n(u-1/2)$ and $f_{n+2\ts n+1}(u)=-f_n(u-1/2)$.}
They are given in \eqref{yhiipone}, and
for $i=1,\dots,n-1$ we have
\begin{gather*}
e_{(i+1)'\ts i'}(u)=-e_{i\ts i+1}(u+\ka-i)\Fand
f_{i'\ts (i+1)'}(u)=-f_{i+1\ts i}(u+\ka-i),
\end{gather*}
with some additional type-specific relations.
These relations allow us to express the images of the generators of $\Y^{D}(\g)$
under the composition of the isomorphism $\Y^{D}(\g)\to\Y(\g)$ provided by
\cite[Main Theorem]{jlm:ib}
with the automorphism \eqref{mausi}, in terms of the Gaussian generators
\eqref{myhmqua}--\eqref{myfijlmqua}.
The formulas given in the statement of the theorem are obtained
by taking further compositions with the shift automorphism $T(u)\mapsto T(u+\ka-1)$
and with the automorphism which multiplies all generators
$\xi_{i\tss r}^{\pm}$ by $-1$, while leaving all $\kappa^{}_{i\ts r}$ unchanged.
\end{proof}

\subsection*{Acknowledgements}

We acknowledge the support of the Australian Research Council, grant DP180101825.

\pdfbookmark[1]{References}{ref}
\LastPageEnding

\end{document}